\documentclass[11pt,a4wide]{article}

\usepackage{multirow}
\usepackage{algorithmic}
\usepackage{amsmath}
\usepackage{amsfonts}
\usepackage{amsthm}
\usepackage[english]{babel} 
\usepackage{dsfont}
\usepackage{amssymb}
\usepackage{graphicx}      
\usepackage{cite}                            
\usepackage{subcaption}
\usepackage{epstopdf}
\usepackage{epsfig}
\usepackage{subfloat}
\usepackage{float}
\usepackage[thinlines]{easytable}
\usepackage{array}
\usepackage{mathtools}
\usepackage{stmaryrd}
\usepackage{bm}
\usepackage{hyperref}
\hypersetup{
    colorlinks=true,
    linkcolor=blue,
    filecolor=magenta,      
    urlcolor=cyan,
    pdftitle={Overleaf Example},
    pdfpagemode=FullScreen,
    }
\usepackage[normalem]{ulem}

\usepackage{booktabs}
\usepackage{url}
\usepackage[font=small,labelfont=bf,textfont=it,indention=.1cm,width=1.1\textwidth]{caption}

\usepackage[capitalise]{cleveref}
\usepackage{comment}

\allowdisplaybreaks
\numberwithin{equation}{section}

\newtheorem{remark}{Remark}[section]

\newtheorem{lemma}{Lemma}[section]
\newtheorem{theorem}{Theorem}[section]

\newtheorem{proposition}{Proposition}[section]
\usepackage[linesnumbered,algoruled,boxed,lined]{algorithm2e}

\def\RR{\mathbb R}

\def\e{\epsilon}
\def\ve{\varepsilon}
\def\a{\alpha}
\def\b{\beta}

\def\argmax{{\rm arg}\!\max}
\def\be{\begin{equation}}
\def\ee{\end{equation}}
\def\bea{\begin{eqnarray}}
\def\eea{\end{eqnarray}}

\def \obj{j}
\def \pen{P}
\def \r {r}
\def \MM {\mathcal{M}}
\def \d {\text{d}}
\def\dist{\textup{dist}}

\newcommand{\lp}{\left(}
\newcommand{\rp}{\right)}

\DeclareMathOperator{\amin}{argmin}
\setlength{\textwidth}{16cm}
\setlength{\oddsidemargin}{.5cm}
\setlength{\evensidemargin}{.5cm}
\setlength{\textheight}{21cm}
\setlength{\topmargin}{-.5cm}

\title{Constrained consensus-based optimization}

\author{Giacomo Borghi\footnote{RWTH Aachen University, Institute for Geometry and Applied Mathematics, Aachen, Germany; University of Ferrara, Department of Mathematics and Computer Science, Ferrara, Italy (borghi@eddy.rwth-aachen.de)}
\and Michael Herty \footnote{RWTH Aachen University, Institute for Geometry and Applied Mathematics, Aachen, Germany (herty@igpm.rwth-aachen.de)}
\and Lorenzo Pareschi\footnote{University of Ferrara, Department of Mathematics and Computer Science, Ferrara, Italy (lorenzo.pareschi@unife.it)}}

\begin{document}
\maketitle

\begin{abstract} In this work we are interested in the construction of numerical methods for high dimensional constrained nonlinear optimization problems by particle-based gradient-free techniques. 
A consensus-based optimization (CBO) approach combined with suitable penalization techniques is introduced for this purpose.  
The method relies on a reformulation of the constrained minimization problem in an unconstrained problem for a penalty function and extends to the constrained settings the class of CBO methods. 
Exact penalization is employed and, since the optimal penalty parameter is unknown, an iterative strategy is proposed that successively updates the parameter based on the constrained violation.
Using a mean-field  description of the the many particle limit of the arising CBO dynamics, we are able to show convergence of the proposed method to the minimum for general nonlinear constrained problems. Properties of the new algorithm are analyzed. Several numerical examples, also in high dimensions, illustrate the theoretical findings and the good performance of the new numerical method.
\end{abstract}

{\bf Keywords}: consensus-based optimization, constrained nonlinear minimization, gradient-free methods, mean-field limit  

\tableofcontents

\section{Introduction}

Consensus-based optimization (CBO) methods have been introduced and studied recently as efficient computational tools for solving high dimensional nonlinear unconstrained minimization problems^^>\cite{carrillo2021consensus,jin2020convergence,carrillo2018analytical,carrillo2019consensus,totzeck2018numerical,gp20,benfenati2021binary,pinnau2017consensus,jinew20,huang2021meanfield,totzeck2021trends}. They belong to the family of individual-based models that are inspired by self-organized dynamics based on alignment^^>\cite{MR3274797,defrli13,Prigogine1977self,Vicseck}. In the context of optimization, CBO methods can be viewed as continuous versions of particle-based metaheuristic techniques or evolutionary algorithms. Indeed, in such methods, an ensemble of interacting particles explores the landscape of the objective function through a combination of alignment towards a collective estimate of the global minimum and stochastic exploration with noise proportional to the distance from that minimum. One of the key aspects of CBO methods is the use of a convex combination of the values explored by the particles in estimating the global minimum according to the Laplace principle^^>\cite{pinnau2017consensus}. 

The recent popularity of CBO type methods stems, in particular, from their simple first order structure and the fact that, for high--dimensional, non convex, nonlinear unconstrained problems, numerical and analytical evidence of their performance has been reported (see^^>\cite{jin2020convergence,carrillo2019consensus} for machine learning applications). Related lines of research have considered mean-field descriptions of particle swarm optimization strategies^^>\cite{gp20,hui20} and
nonlinear sampling techniques, like Ensemble Kalman filtering^^>\cite{MR4121318,MR4059375}.
For an exhaustive list of references as well as a review of existing recent results, we refer to the recent survey articles^^>\cite{carrillo2021consensus,totzeck2021trends,PSOsurvey}. 

 Currently, most of the presented results are so far for the {\em unconstrained} case. There are few results on constrained optimization problems. In^^>\cite{fhps20-3,fhps20-2,fhps20-1} these methods have been considered constrained to hypersurfaces, specifically analyzing the case of the sphere given its importance in numerous applications and demonstrating convergence to the global minimum. Here, we present a method for general constrained problems, that rely on a reformulation of the constrained minimization problem in an unconstrained problem for a penalty function. Since the exact penalty parameter is unknown, 
an iterative strategy is proposed that  successively updates the parameter based on the constrained
violation.

Related algorithms for finite--dimensional
quadratic programming problems have been introduced in ^^>\cite{spellucci2002,herty2007smoothed}. 
Therein, however, additional smoothing of the penalty parameter 
has been conducted to apply second-order methods. The method^^>\cite{spellucci2002,herty2007smoothed} are based on
smooth approximations to the exact $\ell_1$--penalty function which extends other results as e.g.^^>\cite{Zangwill1967,HanMangasarian1979,bertsekas1975,PilloGrippo1989,CG}.

In the nonlinear optimization context,  special methods dealing with the
non--differentiability have been proposed (see e.g.
\cite{Conn1973,MayneMaratos1979,HintermullerUlbrich2004} or 
\cite{Petrzykowski1969,ChenMangasarian1996,BentalTelboulle1988}).
 Here, we do not require any additional smoothing or any special treatment of the non--differentiability, since CBO does not rely on explicit gradient information. 
At the same time as finalizing this manuscript, we learned that the case of smooth penalization in the CBO context has been discussed in^^>\cite{carrillo2021consensusbased}. In their approach however the penalty parameter needs to tend to infinity and contrary to our work no explicit update rule of the parameter is given. 
 \par 
Even so, the optimization problem itself is finite--dimensional with $x \in \mathbb{R}^d,$  the presented method relies on an infinite--dimensional reformulation
to derive the convergence results as outlined above. This is different from a penalty method in infinite--dimensions as e.g. presented in ^^>\cite{MR3028186,MR2724157}. 
\par

The rest of the manuscript is organized a s follows. In Section 2 we introduce the penalization approach for the constrained minimization problem and the corresponding CBO method with adaptive strategy of the penalization parameter. Next, in Section 3 we analyze the convergence properties of the method. To this aim we introduce the corresponding mean-field approximation and show that, under suitable assumptions, the method converges to the minimum for general nonlinear constrained problems. Several numerical results that illustrate the previous analysis and the performance of the method are then reported in Section 4. The manuscript ends with some final conclusions in the last section.

\section{Consensus-based methods for constrained minimization problems}

We are interested in solving constrained optimization problems of the type
\be
\min_{x \in \RR^d} \obj(x) \quad \text{subject to} \quad x \in \MM\,,
\label{pb}
\ee
where the cost functional  $j\in \mathcal{C}(\RR^d, \RR)$ is continuous. The feasible set $\MM \subset \RR^d$ is assumed to have a boundary of zero Lebesgue-measure, i.e.,  
 $\d x(\partial M) = 0$, but we do not require $\MM$ to be convex, contrary to 
 ^^>\cite{ bae2021constrained}.
 \par 
This assumption is not restrictive and as an example,  $\MM = \{ x: g(x) \leq 0 \}$  defined by inequality constraints, where $g(x) = G^\top x + g_0$ for some $G \in \RR^{n \times m}, g_0 \in \RR^m$ has this property.
\par 
The constrained problem is reformulated as unconstrained problem for an exact penalty function $\pen(x,\b)$ depending on the (unknown) multiplier $\b \in \RR.$ In the following, the  exact penalization of the objective function is used 
\be
\notag
 \pen(x,\b):= \obj(x) + \b\r(x) \qquad \text{where} \quad \b \geq 0, \quad \r(x)
\begin{cases} 
=0\ & \text{if}\quad x \in \MM \\
>0\ & \text{else}
\end{cases}
\ee

\be
\min_{x \in \RR^d} P(x, \b)\,.
\label{pb:pen}
\ee

We denote by \eqref{pb:pen} the penalty subproblem and by $P (x, \b)$ the penalty function. The penalty term depends on the  parameter $\b$ and we assume that 

\begin{enumerate}
\item[(A1.1)] There exists $\bar \b\geq 0$, such that for all $\b \geq \bar \b$, the minimum of $\pen(x,\b)$ is the global solution of \eqref{pb}. 
\item[(A1.2)] For $\b < \bar \b$ the two problems are not equivalent, namely, $\inf_{x \in \RR^d} P(x, \b) < \inf_{x \in \MM} j(x)$.
\end{enumerate}
Note that (A1.2) is a technical assumption while (A1.1) states that $P$ is an exact penalty function also for the global minimum. Exact penalization has been investigated in many publications and we refer to ^^>\cite{bertsekas1975,HanMangasarian1979,bertsekas1982,  burke1991} for more details on this topic.
 
 Let us only note here, that for a generic problem where $\MM:=\{ x \in \RR^d: g(x)\leq 0\}$ and $r(x)= \| \max(0,g(x)) \|_1$
 with $j,g$ twice differentiable at the global minimum $x^*$ of \eqref{pb}, if the KKT conditions and the weak second-order sufficient optimality condition hold, a value $\bar \beta$ which satisfies (A1.1) is given by any $\bar\beta > \|\lambda\|_\infty$ where $\lambda$ is the Lagrange multiplier to $g$^^>\cite{bonnans2013numerical}.

 In the case of exact penalization, it is also known, that the penalty function $r(x)$ is  non--differentiable, as for instance in the case of the exact $\ell_1$--penalization $r(x) =  \| \max(0,g(x)) \|_1$.

\par 
Therefore, in the following we do not assume differentiability properties of $P.$ 
Note that, smooth penalization approaches, such as $\ell_2$--penalization $r(x) =  \| \max(0,g(x)) \|^2_2$, would allow us to preserve the (eventual) smoothness of the objective function at cost of taking a possibly unbounded penalty parameter $\b \to \infty.$ In this case, the solution of \eqref{pb:pen} for a given (finite) $\b$, may be infeasibile for every $\b\geq0$ and may converge the solution of \eqref{pb} only asymptotically as $\b \to \infty$, see^^>\cite{burke1989}.

\subsection{Constrained CBO methods}
We assume (A1) and propose a modified CBO method to solve \eqref{pb:pen}. To simplify the description we consider the case where the corresponding system of stochastic differential equations is solved by the Euler-Maruyama method^^>\cite{pinnau2017consensus,carrillo2019consensus}.
Starting from an initial set of particles $(X_0^i)_{i=1}^N$ sampled from a common given  distribution  $f_0$, $X_0^i \sim f_0$,  the CBO scheme iteratively updates the particles position to explore the objective function landscape and, eventually, concentrate around the (global) minimizer.
\par 
Before we describe the iterative step and the application to the penalty problem, we first introduce the weighted average $X^\a_k$ which is calculated through a Gibbs-type distribution dependent on the penalty function $P(x, \beta)$,

\begin{equation}
X_k^\a = \frac{1}{Z_\alpha}\sum_{i=1}^N X_k^i \exp\left(-\alpha \pen(X_k^i,\beta)\right), 
\label{eq:Xa}
\end{equation}
where $Z_\alpha$ is a normalization constant. The expectation of $X_k^\a$ can then be seen as an approximation of $$\amin_{i=1,\dots,N} P(X_k^i,\beta),$$ since 
\be
X^\a_k \; \rightarrow \;\amin_{i=1,\dots,N} P(X_k^i,\beta) \quad \text{as} \quad \a \to \infty\,,
\notag
\ee
provided that the global minimum exists^^>\cite{pinnau2017consensus,carrillo2018analytical,carrillo2019consensus}. 
Also, the choice of the Gibbs distribution in the definition of $X^k_\alpha$ is justified by the Laplace principle^^>\cite{Dembo2010} which states that for any absolutely continuous distribution $f \in \mathcal{P}(\RR^d)$ we have 
\be
\lim_{\a \to \infty}\left( - \frac 1\a \log \left( \int  e^{-\alpha P(x, \beta)} \d f(x) \right) \right) = \inf_{x \in \text{supp}(f)} P(x, \b)\,.
\label{eq:laplace}
\ee

\par
In a CBO method, at every step $k$, the particles are driven towards $X^\alpha_k$. In this way, we expect that for large values of $\a$ the system tends to concentrate among the particles where $P$ attains its minimum. In  addition,  we add a stochastic component in the state update, which depends on a set of
vectors $(B_k^i)_{i=1}^N, B_k^i \in \RR^d$ sampled from the normal standard distribution and on a given matrix $D^i_k \in \RR^{d \times d}$.

That is, the iteration reads
\be
X_{k+1}^i =X_{k}^i  -\lambda(X_k^i - X_k^\a)\Delta t + \sigma D^i_k B_k^i\sqrt{\Delta t}
\label{eq:iter}
\ee

where $\Delta t>0$ and the two parameters, $\lambda>0$ and $\sigma>0$, control the drift towards  $X_k^\a$ and the influence of the stochastic component respectively. 

The choice of $D_k^i$ characterizes the particles stochastic exploration process. As an example, the isotropic exploration was introduced in^^>\cite{pinnau2017consensus} and reads
\be
D_{k,\text{iso}}^i =  \| X_k^i - X^\a_k\| I_d\,,
\label{eq:iso}
\ee
where $I_d$ is the $d$-dimensional identity matrix, whereas in the anisotropic exploration, introduced in^^>\cite{carrillo2019consensus}, we have that
\be
D_{k,\text{aniso}}^i =  \text{diag} \left( (X_k^i - X^\a_k)_1 , \dots ,(X_k^i - X^\a_k)_d  \right)\,.
\label{eq:aniso}
\ee

For both processes, the magnitude of the random component associated with the particle $X^i_k$ depends on the difference between the weighted average $X^\a_k$ and the particle itself. In particular, particles that are far from $X^\a_k$ have a stronger exploration behavior compared to those close to it. The difference between the methods \eqref{eq:iso} and \eqref{eq:aniso} is on the direction of the stochastic component. Indeed, while in the isotropic case every dimension is equally explored, in the anisotropic process the particles explore each dimension at a different rate. The component-wise exploration better suits high dimensional problems, as the particles convergence rate is independent of the dimension $d$^^>\cite{carrillo2019consensus,fornasier2021convergence}. While we will focus the convergence analysis on the  isotropic CBO method, we will discuss extensions to the anisotropic case and compare the isotropic and anisotropic exploration processes on numerical examples  in Section \ref{s:43}.

\subsection{The update strategy for the penalty parameter}
 Since consensus-based optimization methods may handle non--differentiable objective functions without any additional effort, we use the concept of exact penalization. However, the value of $\bar \b$ such that (A1) holds is not known.
\par 
For any fixed parameter $\b$, we need to solve an unconstrained optimization problem for which the behavior of the CBO method has been broadly analyzed, e.g. in ^^>\cite{carrillo2018analytical,carrillo2019consensus, pinnau2017consensus, fornasier2021consensusbased,fornasier2021convergence, jin2020convergence}. In the constrained settings, though, the optimal value $\bar \b$ is unknown and the system \eqref{eq:sde} may concentrate around an infeasibile points, if $\b$ is not sufficiently large. 

Instead of solving the penalty subproblem \eqref{pb:pen} multiple times, we tune $\b$ during the computation  by adopting the following strategy. 
At each iterate $k$, we update the penalty parameter depending on the  constraint violation of the particles.  In particular, given an (algorithmic) parameter $\theta_k$,  we the tolerance $1/ \sqrt{\theta_k}$ and  evaluate whether the ensemble average on $r$ is below this tolerance: 
\be
\frac 1N \sum_{i=1}^N r(X_k^i)  \leq \frac{1}{\sqrt{\theta_k}}\,.
\label{eq:constr1}
\ee
If the inequality holds true, we  decrease the tolerance by setting $$\theta_{k+1} = \eta_\theta \theta_{k},$$ for some $\eta_\theta> 1$. Otherwise, we increase the penalty parameter by a factor $\eta_\b>1$, 
$$\b_{k+1} = \eta_\b \b_{k},$$
 and increase the tolerance by setting $\theta_{k+1} = \min \{ \theta_k/\eta_{\theta}, \theta_0\}$. The complete adaptive strategy is summarized in Algorithm \ref{alg:iter}.

\begin{algorithm}
\SetAlgoLined
\DontPrintSemicolon
 Initialize interaction parameters: $\b_0,\, \sigma,\, \lambda$\;
 Initialize algorithm parameters: $\theta_0,\, \eta_\theta, \eta_\b, \Delta t, K$\;  
 Set initial conditions $f_0$\;
 
 $X^i_0 \sim f_0 \quad \forall\, i =1, \dots, N$\;
 \For{k = 1, \dots, K}{
    Compute $X^\a_{k-1}$ according to \eqref{eq:Xa} \;
	 $B^i_{k-1} \sim \mathcal{N}(0,1) \quad \forall i =1, \dots, N$\;
    Compute $X_i^k$ according to \eqref{eq:iter} $\quad \forall\, i =1, \dots, N$\;
    
     \eIf{the feasibility check (\eqref{eq:constr1} or \eqref{eq:constr2}) holds}{
   $\theta_{k+1}  =\eta_\theta\, \theta_k $  \;
   $\b_{k+1}  =\b_k $  \;  
   }{
   $\theta_{k+1}  = \min \{ \theta_k/\eta_{\theta}, \theta_0\} $  \; 
   $\b_{k+1}  =\eta_\b \,\b_k $  \; 
  }
 }
 \caption{Update strategy for penalized CBO} 
 \label{alg:iter}
 
\end{algorithm}
Concerning the update strategy, some remarks are in order. 
\begin{remark}^^>
	\begin{itemize}
		\item Instead of \eqref{eq:constr1} we may also check the feasibility of the particles using a weighted expectation of $r(x)$:
		\be
		\frac 1{Z_\alpha} \sum_{i=1}^N r(X_k^i) \exp\left(-\alpha \pen(X_k^i,\beta)\right)  \leq \frac{1}{\sqrt{\theta_k}}\,,
		\label{eq:constr2}
		\ee
		as present in the definition of $X_k^\alpha$, see \eqref{eq:Xa}. 
		This condition will be use also used in the numerical examples and we discuss therein its performance. The theoretical analysis following is based on \eqref{eq:constr1}. 
		
		\item While the penalty parameter $\b_k$ is an increasing sequence, the tolerance may both decrease and, in particular, increase up to its initial value. This 
		is not required for the convergence analysis. Numerically, we have observed that this  prevents  the feasibility  to became too restrictive as the particles evolves.
		
	\end{itemize}

\end{remark}

To illustrate the above algorithm let us consider the one--dimensional problem with box constraints 
\be
\min_{x \in \RR} j(x) :=\frac{x^4}5 - 2x^2 + x + 10 \quad \text{subject to} \quad x \geq -1.5 \,.
\label{pb:test1}
\ee
The polynomial objective function attains its global minimum in $\hat x = - 2.3519$, while the solution of the constrained problem is  $\eqref{pb:test1}$ is $x^* = -1.5$. By adding the $\ell_1$-penalization term $r(x) = \|\max(0,-x-1.5)\|_1$, $P(x, \b)$ is exact 
for $\b \geq \bar \b = 4.3$. 

Figure \ref{fig:1} shows the evolution of the particle system at different iteration steps (here denoted as time). Starting form a normal distribution of the initial ensemble of particles and $\b_0=0.1$,  they converge towards the objective function infeasible minimum (Fig.\ref{fig:1a}). As the parameter $\beta_k$ increases  as $k$ increases, the particles are moving away from $\hat x$ (Fig.\ref{fig:1b}) and, once $\beta_k \geq \bar \b$, consensus at the solution $x^*$ is reached, see Fig. \ref{fig:1c}. 

\begin{figure}[h]
\begin{subfigure}{0.33\linewidth}
  \centering
  \includegraphics[trim= 7.5cm 10.5cm 7.5cm 10.5cm, clip, width=1\linewidth]{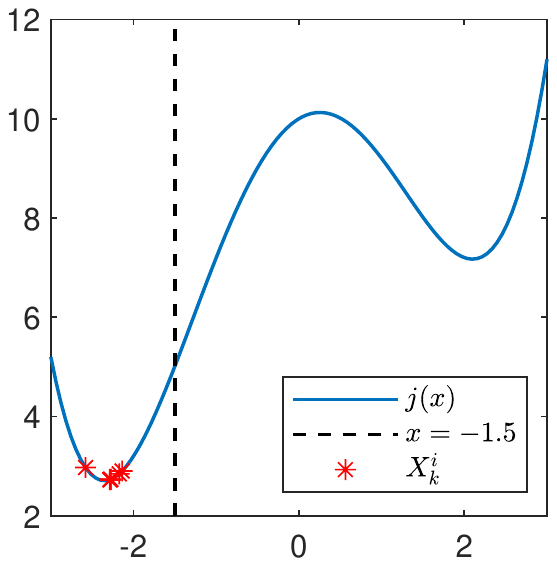}
  \caption{$t = 0.5,\, \beta/\bar\beta \simeq 0.19 $.}
 \label{fig:1a}
\end{subfigure}%
\begin{subfigure}{0.33\linewidth}
  \centering
  \includegraphics[trim= 7.5cm 10.5cm 7.5cm 10.5cm, clip, width=1\linewidth]{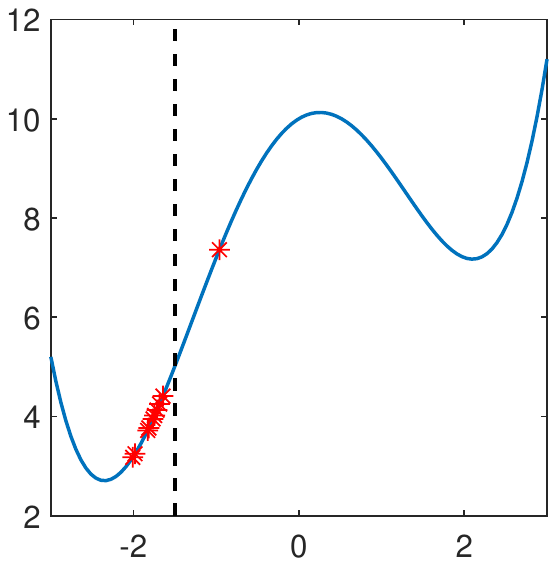}
  \caption{$t = 1, \,\beta/\bar\beta \simeq 0.87 $.}
 \label{fig:1b}
\end{subfigure}%
\begin{subfigure}{0.33\linewidth}
  \centering
  \includegraphics[trim= 7.5cm 10.5cm 7.5cm 10.5cm, clip, width=1\linewidth]{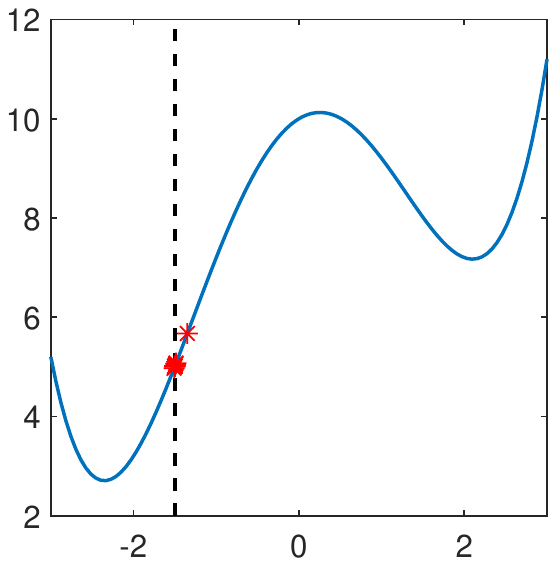}
  \caption{$t = 1.5,\, \beta/\bar\beta \simeq 1.05 $.}
 \label{fig:1c}
\end{subfigure}%

\caption{Algorithm \ref{alg:iter} applied to problem \eqref{pb:test1} with $N=10$ particles. The plots represent the particle distribution at different times $t_k = k\Delta t, \Delta t = 10^{-2}
$. The parameters are $\lambda = 1, \sigma = 10, \b_0 =0.1, \theta_0 = 1, \eta_\b = 1.1, \eta_\theta = 1.1, \Delta t = 10^{-2}.$}
\label{fig:1}
\end{figure}

\begin{figure}[ht]
	\centering
	 \includegraphics[trim= 6cm 10cm 6cm 10cm, clip, width=.475\linewidth]{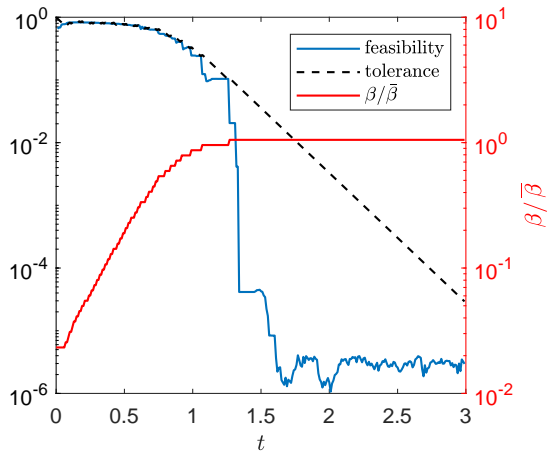}
	\caption{The plot compares the evolution of the feasibility violation (LHS of \eqref{eq:constr2}) with the tolerance $1/\sqrt{\theta}$ and the penalty parameter. We note that $\b$ is nearly constant once the value $\bar \b$ is reached.}
	\label{fig:1pen}
\end{figure}

The feasibility violation (calculated according to \eqref{eq:constr2}) is compared to the tolerance in Figure \ref{fig:1pen}. We note that, up until time $t = 1.3$, the feasibility condition is violated at almost every time, but satisfied at subsequent times. At time or iteration $t=1.3$ the penalty parameter $\b$ is close to $\bar\b$ and hence, the penalty subproblem is  equivalent to the constrained optimization problem.

The test shows  two distinct stages of the algorithm, that will be also discussed analytically: a first where $\b_k$ is smaller than (the general unknown value) $\bar \b$ and a second stage where $\b_k \geq \bar \b$. Clearly, if the adaptive strategy is not capable to reach the second stage, the optimization method will not succeed.  We will analytically investigate this possibility in the next section.


\section{Convergence analysis}
\label{sec:3}

In order to analyze the convergence properties of the proposed methods, we introduce the so-called mean--field approximation of Algorithm \ref{alg:iter} which is continuous-in-time and, informally,  exact when the number of particles is infinite. So, we present the main convergence result, Theorem \ref{t:main}. The proof will be provided by first studying the algorithm behavior when, during the computation, the penalty subproblem \eqref{pb:pen} is equivalent to the original constrained problem \eqref{pb}. Secondly, we look at the case where they are not equivalent, i.e. the penalty parameter $\b$ is not sufficiently large and needs to be updated.

\subsection{Mean-field approximation and main result}
\label{sec:31}

Given an ensemble of $N \in \mathbb{N}$  stochastic processes $\{(X_t^i)_{i = 1}^N\;|\; t>0\}$ which take values in $\RR^d$ and such that $t \mapsto X_t^i$ is continuous almost everywhere, we denote $f^N(t) = \sum_{i=1}^N \delta_{X_t^i}$ the random empirical measure at time $t$. We also denote by $\mathcal{P}(\RR^d)$ the space of Borel probability measures and with finite $q$-th moment, i.e., $\mathcal{P}_q(\RR^d) = \{ f \in \mathcal{P}(\RR^d) \,:\,  \int \|x\|^q \d f(x) < \infty \}$.

Following^^>\cite{pinnau2017consensus,carrillo2018analytical}, we consider the Euler--Maruyama  time discretization 
 \eqref{eq:iter}  of the following system of $N$ stochastic differential equations  and assume  isotropic diffusion \eqref{eq:iso}: 

\be
\begin{split}
dX_t^i &= -\lambda \left(X_t^i - x_\a(f^N)\right)dt + \sigma \| X_t^i - x_\a(f^N)\|dB_t^i \\
\lim_{t\to 0 } X_t^i &= X_0^i \qquad X_0^i \sim f_0. 
\label{eq:sde}
\end{split}
\ee
Here,  the local estimate of the global minimum of $P(\cdot,\b)$ at the time $t\geq0$ is given by  $x_\a(f^N)$ and defined by 
\begin{equation} \notag
x_\a(f^N) = \frac1 Z_\alpha \int x\, e^{-\a \pen(x,\b)} \d f^N(t,x)\,. 
\end{equation}

The well--posedness of system has been investigated e.g. in^^>\cite{carrillo2018analytical}. We recall, if $f_0 \in \mathcal{P}_4(\RR^d)$ and $\pen$ is locally Lipschitz, then the system \eqref{eq:sde} admits a unique strong solution $\{ (X_t^i)_{i=1}^N \;|\; t\geq 0\}$ with empirical distribution $f^N(t) \in \mathcal{P}({\RR^d})$.

Equation \eqref{eq:sde} is the continuous-in-time limit of the iterative method \eqref{eq:iter}. The  continuous-in-time formulation allows to derive a 
mean-field limit in the large particle limit  $N\gg1$. To be more specific, under mild assumptions on the objective function $P$, if $\{ (X_t^i)_{i=1}^N \;|\; t \in [0,T]\}$ is a solution to \eqref{eq:sde} for some time horizon $T>0$, and $f_0 \in \mathcal{P}_4(\RR^d)$, then for all $t \in [0,T]$
\[f^N(t) \; \longrightarrow\, f(t) \quad \text{in law as}\; N \to \infty.  \]
Moreover, the limit $f \in  \mathcal{C}([0,T], \mathcal{P}_4( \RR^d))$ is the unique weak solution to the (deterministic) Fokker-Planck equation
\be
\partial_t f = \lambda\, \text{div}((x - x_\a(f))f) + \frac{\sigma^2}2 \Delta (\|x - x_\a(f)\|^2f)\,,
\label{eq:pde}
\ee
with initial data $f_0$ over the function space
\be
\notag
\mathcal{C}_*^2(\RR^d) := \{ \phi \in \mathcal{C}^2(\RR^d)\; | \; \| \nabla \phi (x) \| \leq C( 1+ \|x\|) \;\; \text{and} \;\; \sup_{x  \in \RR^d} |\Delta \phi(x) | < \infty \}\,.
\ee
That is, $f(t)$ fulfills for all $\phi \in \mathcal{C}_*^2(\RR^d)$ and for all $t\in (0,T)$
\be
\frac{d}{dt} \int \phi(x) \d f(t,x) = -\lambda \int (x- x_\a(f))\cdot \nabla \phi(x) \d f(t,x) 
+ \frac{\sigma^2}2 \int \| x - x_\a(f)\|^2 \Delta \phi(x) \d f(t,x)
\label{eq:ws}
\ee
and the initial data is attained pointwise, i.e., $\lim_{t \to 0} f(t) = f_0$. 
We refer to^^>\cite{huang2021meanfield} for a detailed discussion and proof of the mean-field limit and to ^^>\cite{carrillo2018analytical,fornasier2021consensusbased} for the well-posedness of \eqref{eq:pde}. 
We recall, that \eqref{eq:pde} strongly depends on the function $P$ through the term $x_\a(f)$, which is nonlinear and nonlocal.

Since the mean-field model is more amenable to theoretical analysis, we introduce the mean-field counterpart of Algorithm \ref{alg:iter}, Algorithm \ref{alg:mf}. In the following  we  study the large time behavior of the particle distribution $f$ constructed through Algorithm \ref{alg:mf}.

\begin{algorithm}[h]
	\SetAlgoLined
	\DontPrintSemicolon
	Initialize interaction parameters: $\b_0,\, \sigma,\, \lambda$ with $2\lambda > d \cdot \sigma^2$\;
	Initialize algorithm parameters: $\theta_0,\, \eta_\theta, \eta_\b, \Delta t, K$\;  
	Set initial conditions $f_0$\;
	
	\For{k = 1, \dots, K}{
		Set $g^k$ to be the solution to \eqref{eq:pde} over the time interval $[t_{k-1},t_k]$
		with penalty parameter $\b_k$ and initial data $f_{k-1}$\;    
		$f(t) = g^k(t)$ for all $t \in [t_{k-1},t_k]$\;
		
		$f_k = f(t_k)$\;
		
		\eIf{the feasibility check \eqref{eq:conmf} holds}{
			$\theta_{k+1}  =\eta_\theta\, \theta_k $  \;
			$\b_{k+1}  =\b_k $  \;  
		}{
			$\theta_{k+1}  = \min \{ \theta_k/\eta_{\theta}, \theta_0\} $  \; 
			$\b_{k+1}  =\eta_\b \,\b_k $  \; 
		}
	}
	\caption{Mean-field penalized CBO} 
	\label{alg:mf}
	
\end{algorithm}

In Algorithm \ref{alg:mf}, we evolve the the solution of the Fokker-Planck equation \eqref{eq:pde} for a (possibly short) time interval $\Delta t$  and at the discrete times $t_k = k \Delta t, 1\leq k \leq K$, we consider the  feasibility of the solution. 
Written in terms of the mean-field distribution $f$, the feasibility condition \eqref{eq:constr1} reads then 
\be
\int r(x)\, \d f_k(x) \leq \frac{1}{\sqrt{\theta_k}}\,
\label{eq:conmf}
\ee
where $f_k=f(t_k)$ is the particle distribution at time $t_k$. 

At the end of the computation, we have that $f \in \mathcal{C}([0, K\Delta t], \mathcal{P}_4(\RR^d))$ and, when restricted to the time interval $[t_{k-1}, t_k]$ for some $1\leq k \leq K$, it solves \eqref{eq:pde} with initial data $f_{k-1} = f(t_{k-1})$ and penalty parameter $\beta_k$.

For the analysis in the following paragraph we assume 
\begin{enumerate}
\item[(A1.3)]
$r(x) := \dist(x, \MM) = \min_{y\in \MM} \|x - y\|\,.$
\end{enumerate}

To analytically study the convergence of  $f$ to the solution $x^*$ of \eqref{pb} we introduce the functional
\be
\mathcal{V}(t) := \frac 12 \int \| x- x^*\|^2 \d f(t,x)\,.
\label{eq:V}
\ee
which gives a measure on how far $f(t)$ is from $x^*$. 

For all $\b = \b_k, \,k \geq 1$,
we also assume
$P(x,\beta)$ to be $p$-conditioned in a neighborhood of the minimizer $x_\beta^*:= \amin_{x \in \RR^d} P(x,\beta)$, for some $p\geq2$. Namely, we assume that 
\begin{itemize}
\item[(A2)] $ P(x,\beta) - \inf_{x \in \RR^d} P(x,\beta) \geq C'_1\| x - x^*\|^p \quad \text{for all}\; \|x\|^2 \leq R'_1$
\end{itemize}
and that, outside this neighborhood of $x^*$, $P(x,\beta)$ is not arbitrary close to $\inf_{x \in \RR^d} P(x,\beta)$:
\begin{itemize}
\item[(A3)]$P(x,\beta) - \inf_{x \in \RR^d}P(x,\beta) \geq P_\beta^\infty \quad \text{for all}\; \|x\|^2 \geq R'_1\,$.
\end{itemize}
Introduced in^^>\cite{vainberg1970, zolezzi1978}, the notion of conditioning is a common tool on optimization literature, see for example^^>\cite{garrigos2020convergence}, and it is also known as \textit{growth condition}^^>\cite{penot1996}.

Finally, we present the main convergence result.
\begin{theorem} Assume (A1)--(A3) for all $P(x, \beta_k)\,\,k\geq 1$. Let $f$ be constructed according to line 6 of Algorithm \ref{alg:mf} with initial datum $f_0 \in \mathcal{P}_4( \RR^d)$ such that
\be
\{x_{\b_k}^* \in \RR^d\, |\;\; \text{for all}\;\;\b_k < \bar \b  \} \cup \{x^*\} \subseteq \textup{supp}(f_0)\,,
\label{eq:tcond}
\ee
where $x_\beta^*= \amin_{x \in \RR^d} P(x,\beta)$.

Given an accuracy $\ve \in (0,\mathcal{V}(0))$, if $K, \theta_0$ and $\a$ are large enough, then there exists $T^* \in [0, K\Delta t]$ such that
\[
\mathcal{V}(T^*) = \ve\,.
\]
\label{t:main}
\end{theorem}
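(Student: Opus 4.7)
The plan is to follow the two-stage structure highlighted after Algorithm~\ref{alg:mf}: a transient phase during which $\beta_k<\bar\beta$ and the penalty subproblem is not equivalent to \eqref{pb}, and a terminal phase in which $\beta_k\geq\bar\beta$ and the standard CBO convergence machinery applies to the objective $P(\cdot,\beta_k)$ whose minimizer is already $x^*$. Since $t\mapsto f(t)$ is continuous in $\mathcal{P}_4(\RR^d)$, $\mathcal{V}$ is continuous in $t$, and $\mathcal{V}(0)>\varepsilon$; so it is enough to show that $\mathcal{V}$ eventually drops below $\varepsilon$ and then invoke the intermediate value theorem to produce $T^*$.

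\textbf{Phase~1 (adaptive growth of $\beta_k$).} First I would use (A1.2) together with (A1.3) to show that, whenever $\beta_k<\bar\beta$, the penalty minimizer $x^*_{\beta_k}$ lies outside $\mathcal{M}$, so $r(x^*_{\beta_k})=\dist(x^*_{\beta_k},\mathcal{M})\geq\delta_k>0$. For each fixed $\beta_k$, Algorithm~\ref{alg:mf} evolves $f$ by \eqref{eq:pde} over $[t_{k-1},t_k]$, and (A2)--(A3) applied to $P(\cdot,\beta_k)$, combined with the support condition \eqref{eq:tcond} and the Laplace principle \eqref{eq:laplace}, yield that $x_\alpha(f)$ approaches $x^*_{\beta_k}$ as $\alpha\to\infty$. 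A Lyapunov estimate with the test function $\varphi(x)=\tfrac12\|x-x^*_{\beta_k}\|^2$ then drives the variance of $f$ about $x^*_{\beta_k}$ to a small residual. Exploiting the $1$-Lipschitz continuity of $\dist(\cdot,\mathcal{M})$ gives
\[
\int r(x)\,\d f_k(x)\;\geq\; r(x^*_{\beta_k})-\sqrt{2\,V_{\beta_k}(f_k)}\;\geq\;\tfrac{1}{2}\delta_k
\]
once the transient variance is below $\delta_k^{2}/8$, which happens for $\alpha$ and $\theta_0$ large enough. Hence, as soon as $\theta_k>4/\delta_k^{2}$ the feasibility check \eqref{eq:conmf} fails and $\beta$ is multiplied by $\eta_\beta>1$. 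The reset rule $\theta_{k+1}=\min\{\theta_k/\eta_\theta,\theta_0\}$ prevents the tolerance from vanishing during the transient, so the failure condition keeps being re-triggered; since $\beta_k$ grows geometrically, after at most $N_1:=\lceil\log(\bar\beta/\beta_0)/\log\eta_\beta\rceil$ failures we have $\beta_k\geq\bar\beta$.

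\textbf{Phase~2 (CBO convergence to $x^*$).} Once $\beta_k\geq\bar\beta$, (A1.1) gives $\argmin P(\cdot,\beta_k)=x^*$. Testing \eqref{eq:ws} with $\varphi(x)=\tfrac12\|x-x^*\|^2\in\mathcal{C}_*^2(\RR^d)$ and applying Cauchy--Schwarz to the drift term produces the differential inequality
\[
\tfrac{d}{dt}\mathcal{V}(t)\;\leq\;-(2\lambda-d\sigma^2)\,\mathcal{V}(t)+C\,\|x_\alpha(f(t))-x^*\|\sqrt{\mathcal{V}(t)},
\]
the same starting point used in \cite{carrillo2018analytical,fornasier2021convergence}. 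The standing bound $2\lambda>d\sigma^2$ imposed in Algorithm~\ref{alg:mf} makes the linear term dissipative, while (A2)--(A3) together with \eqref{eq:tcond} allow a quantitative version of \eqref{eq:laplace} that bounds $\|x_\alpha(f(t))-x^*\|$ by a term which tends to zero as $\alpha\to\infty$ and as $\mathcal{V}(t)\to0$. Gr\"onwall's lemma then yields exponential decay of $\mathcal{V}$ down to an $\alpha$-dependent residual. Choosing $\alpha$ large enough so that this residual lies below $\varepsilon$, and $K$ large enough so that the time available in Phase~2 exceeds the associated relaxation time, forces $\mathcal{V}$ to cross the level $\varepsilon$; existence of $T^*\in[0,K\Delta t]$ follows by continuity.

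\textbf{Main obstacle.} The delicate step is Phase~1. The feasibility check \eqref{eq:conmf} uses an \emph{unweighted} ensemble average of $r$, so the bridge between ``the CBO dynamics concentrate near $x^*_{\beta_k}$'' and ``the inequality \eqref{eq:conmf} is violated'' must be made quantitative and uniform over the finitely many transient values of $\beta_k$. This requires (i) a quantitative Laplace principle under (A2)--(A3) with explicit dependence on $\beta_k$, (ii) the support condition \eqref{eq:tcond} to guarantee that every transient minimizer $x^*_{\beta_k}$ genuinely attracts the ensemble rather than being missed, and (iii) a uniform fourth-moment bound on $f$ throughout the transient so that $\mathcal{V}$ is controlled at the entry of Phase~2. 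Once these three ingredients are in place, the usual CBO convergence analysis closes the argument in Phase~2 and yields the stated conclusion.
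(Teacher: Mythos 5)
Your proposal follows essentially the same route as the paper's proof: a two-stage argument in which (i) for $\b_k<\bar\b$ the mean-field dynamics concentrate at the infeasible minimizer $x^*_{\b_k}$, the $1$-Lipschitz bound $\int r\,\d f \geq r(x^*_{\b_k})-\sqrt{2\mathcal V_{\b_k}}$ together with a tolerance $1/\sqrt{\theta_0}<r(x^*_{\b_k})$ forces the feasibility check \eqref{eq:conmf} to fail, and $\b_k$ grows geometrically past $\bar\b$ in finitely many steps (this is \cref{p:2} with condition \eqref{eq:infeas}), while (ii) once $\b_k\geq\bar\b$ the standard mean-field CBO decay toward $x^*$ applies iteratively (\cref{p:1}, built on \cref{t:conv}), and continuity of $\mathcal V$ yields $T^*$. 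The one point worth correcting is your diagnosis of the ``main obstacle'': the technical core of Phase~1 in the paper is not a quantitative Laplace principle but rather the mollification estimates for the non-differentiable $r$ (\cref{l:mol,l:1}) and the reverse Gr\"onwall bound (\cref{l:2}), which bridge the continuous time $T^*$ at which the variance estimate is available and the discrete check times $t_k$, together with the support-preservation result imported from \cite{fornasier2021consensusbased} that keeps every $x^*_{\b_k}$ and $x^*$ in $\textup{supp}(f(t))$ across all iterations.
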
 
We prove the theorem by studying the two different situations separately, first when $\b_k \geq \bar \beta $  at some step $k$ and then when $\b_k < \bar \beta$. The collected results will be then connected to provide a proof of \cref{t:main} at the end of this section.

Some remarks are in order.
\begin{remark}^^> 
\begin{itemize}
\item
Note that, assuming (A1.3) and \eqref{eq:conmf} allows for the following interpretation in Algorithm \ref{alg:mf}: the feasibility of a given ensemble is measured as the  expectation of $r(x)= \dist(x,\MM)$ on this set. 
\item By the update rule of $\b_k$ in Algorithm \ref{alg:mf}, the set $\{x_{\b_k}^* \in \RR^d\, |\;\; \text{for all}\;\;\b_k < \bar \b  \}$ contains a finite number of points, which all belong to $\mathcal{M}^c$ due to (A1.2).
\end{itemize}
\end{remark}

\subsection{Convergence estimates of the penalty subproblem}

When the penalty parameter $\b$ is fixed, the CBO particle method is capable of successfully solving the penalty subproblem \eqref{pb:pen} provided that $P$ satisfies (A2),(A3) ^^>\cite{fornasier2021consensusbased}. In particular, we recall the following result.

\begin{theorem}{{\em^^>\cite[Theorem 12]{fornasier2021consensusbased}}}
Let $\b>0$ be fixed, $P(x, \b) \in \mathcal{C}(\RR^d)$ satisfy assumptions (A2), (A3) and $\lambda, \sigma$ be such that $2\lambda > d \cdot \sigma^2$. Moreover, let $f_0 \in\mathcal{P}_4(\RR^d)$ satisfy
\be
\notag
x_\beta^* \in \textup{supp}(f_0)
\ee 
where $x_\beta^* =  \amin_{x \in \RR^d} P(x, \b)$.

For any accuracy $\ve \in (0, \mathcal{V}(0))$, there exits a time $T^*$ and $\a_0$ large enough such that for all $\a>\a_0$ we have $\mathcal{V}_\beta(T^*) = \ve$, if $f \in\mathcal{C}([0,T]), \mathcal{P}_4(\RR^d))$ is a weak solution to the Fokker-Planck equation \eqref{eq:pde} on the time interval $[0,T], T>T^*,$ with initial conditions $f_0$, 

Furthermore, for all $t \in [0,T^*]$, we have exponential decay
\be
\mathcal{V}_\beta(t) \leq \mathcal{V}_\beta(0) \exp\left ( -\left(\lambda - \frac{d\sigma^2}{2}\right)t \right)\,,
\label{eq:est1}
\ee
\label{t:conv}
where, similar to \eqref{eq:V}, we denote
\be
\notag
\mathcal{V_\beta}(t) := \frac 12 \int \| x- x_\beta^*\|^2 \d f(t,x)\,.
\ee

\end{theorem}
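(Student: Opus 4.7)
The plan is to follow the standard CBO variance-decay strategy: derive an ODE-type differential inequality for $\mathcal{V}_\beta$ from the weak form of the Fokker--Planck equation, control the nonlocal consensus point $x_\alpha(f)$ via a quantitative Laplace principle, and close the estimates by a continuation argument on a horizon along which the resulting error terms remain small.

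First I would insert the test function $\phi(x) = \tfrac{1}{2}\|x - x_\beta^*\|^2$, which lies in $\mathcal{C}^2_*(\RR^d)$ since $\nabla\phi(x) = x - x_\beta^*$ and $\Delta\phi \equiv d$, into the weak formulation \eqref{eq:ws}. Expanding $(x - x_\alpha(f))\cdot(x - x_\beta^*)$ and $\|x - x_\alpha(f)\|^2$ around $x_\beta^*$, and applying Jensen's inequality to bound $\|\mu(f) - x_\beta^*\| \leq \sqrt{2\mathcal{V}_\beta}$ for the mean $\mu(f) = \int x\,\d f$, one arrives at
\[
\frac{d}{dt}\mathcal{V}_\beta(t) \leq -(2\lambda - d\sigma^2)\,\mathcal{V}_\beta(t) + C_1 \sqrt{\mathcal{V}_\beta(t)}\,\|x_\alpha(f(t)) - x_\beta^*\| + C_2 \|x_\alpha(f(t)) - x_\beta^*\|^2,
\]
where $C_1 = \sqrt{2}(\lambda + d\sigma^2)$ and $C_2 = d\sigma^2/2$. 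If the two error terms were absent, Gr\"onwall's inequality with positive rate $2\lambda - d\sigma^2$ would already deliver exponential decay; the halving of the rate to $\lambda - d\sigma^2/2$ in \eqref{eq:est1} is what remains after sacrificing half of the linear coefficient in order to absorb those errors.

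The principal obstacle is the quantitative Laplace estimate bounding $\|x_\alpha(f(t)) - x_\beta^*\|$. I would prove it by splitting the integrals defining $x_\alpha(f)$ over a ball $B_r(x_\beta^*)$ and its complement. By continuity of $P(\cdot,\beta)$ at $x_\beta^*$, $P(x,\beta) - \inf P \leq C r^p$ on $B_r$, whereas (A2) and (A3) together ensure $P(x,\beta) - \inf P \geq \min(C_1' r^p, P_\beta^\infty)$ on $B_r^c$. Comparing the Gibbs mass outside to inside the ball yields an estimate of the form
\[
\|x_\alpha(f(t)) - x_\beta^*\| \leq C' r + (\text{a remainder exponentially small in } \alpha),
\]
where the remainder is inversely proportional to $f(t, B_r(x_\beta^*))$. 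A uniform positive lower bound on this mass is therefore required on the relevant time interval. The support hypothesis $x_\beta^* \in \mathrm{supp}(f_0)$ supplies it at $t=0$, but it must be propagated through the Fokker--Planck evolution. This support-propagation is the technically most delicate ingredient and is the genuine hard step, corresponding to the estimates underlying \cite{fornasier2021consensusbased}.

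To close, I would define $T^* := \inf\{t \geq 0 : \mathcal{V}_\beta(t) \leq \varepsilon\}$ and work on $[0,T^*]$, where the lower bound $\mathcal{V}_\beta(t) \geq \varepsilon$ holds. For a target fraction $q \in (0,1)$, pick $r$ small and then $\alpha_0$ large enough that the Laplace estimate gives $\|x_\alpha(f(t)) - x_\beta^*\| \leq c\sqrt{\varepsilon}$ for $\alpha \geq \alpha_0$, with $c = c(q)$ so small that $C_1 c + C_2 c^2 \leq q(2\lambda - d\sigma^2)$. Using $\sqrt{\varepsilon} \leq \sqrt{\mathcal{V}_\beta(t)}$ on $[0,T^*]$, the differential inequality reduces to $\tfrac{d}{dt}\mathcal{V}_\beta(t) \leq -(1-q)(2\lambda - d\sigma^2)\,\mathcal{V}_\beta(t)$. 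Choosing $q = \tfrac{1}{2}$ and integrating gives exactly \eqref{eq:est1}. Finiteness of $T^*$ follows because this exponential decay forces $\mathcal{V}_\beta$ to hit $\varepsilon$ in finite time, and continuity of $t \mapsto \mathcal{V}_\beta(t)$ ensures $\mathcal{V}_\beta(T^*) = \varepsilon$ exactly.
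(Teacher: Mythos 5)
The paper does not actually prove this theorem: it is imported verbatim as \cite[Theorem 12]{fornasier2021consensusbased}, so there is no in-paper argument to compare against. Your sketch does reproduce the strategy of the cited proof --- the test function $\tfrac12\|x-x_\beta^*\|^2$ in the weak formulation \eqref{eq:ws}, a differential inequality whose error terms are controlled by $\|x_\alpha(f)-x_\beta^*\|$, a quantitative Laplace principle, and a continuation argument that sacrifices half of the decay rate $2\lambda-d\sigma^2$ --- and the algebra you display for the drift/diffusion expansion and the Gr\"onwall closure is sound.

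As a standalone proof, however, there is one genuine gap, and it is exactly the step you name but do not carry out: the uniform-in-time positive lower bound on $f(t,B_r(x_\beta^*))$. Without it the remainder in the Laplace estimate, which has the form $e^{-\alpha q}\bigl(f(t,B_r(x_\beta^*))\bigr)^{-1}\int\|x-x_\beta^*\|\,\d f(t,x)$, cannot be made small by taking $\alpha$ large, since the denominator could a priori degenerate in finite time; the hypothesis $x_\beta^*\in\textup{supp}(f_0)$ only gives positivity at $t=0$. Propagating it requires its own argument (in the cited work one tests the Fokker--Planck equation against a smooth bump supported near $x_\beta^*$ and derives a differential inequality yielding $f(t,B_r(x_\beta^*))\geq e^{-qt}\int\phi_r\,\d f_0>0$, after which $\alpha_0$ is chosen using the a priori bound $T^*\leq(\lambda-d\sigma^2/2)^{-1}\log(\mathcal{V}_\beta(0)/\ve)$ to break the apparent circularity you gloss over). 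A second, minor imprecision: continuity of $P(\cdot,\beta)$ alone does not give $P(x,\beta)-\inf P\leq Cr^p$ on $B_r(x_\beta^*)$; it gives only a modulus of continuity $\omega(r)\to0$, which is in fact all the quantitative Laplace principle needs, whereas a rate $r^p$ would require local Lipschitz or H\"older regularity that is not among the stated hypotheses.
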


Considering  the proof in^^>\cite{fornasier2021consensusbased}, we also obtain  the estimate
\be
\| x_\beta^* - x_\a(f)\| \leq C \sqrt{\mathcal{V_\beta}(t)}
\label{eq:est2}
\ee
and  
\be
x_\beta^* \in \textup{supp}\left(f(t)\right) \quad \text{for all} \quad t \in [0,T^*]\,.
\label{eq:supp}
\ee

Clearly, in Algorithm \ref{alg:mf} the function $P(x, \beta_k)$ varies due to the adaptive strategy and we cannot directly apply the above convergence result. Nevertheless, it will constitutes an essential tool for the following analysis.

\begin{remark} Recently, the case of  anisotropic diffusion introduced in^^>\cite{carrillo2019consensus} has been analyzed in^^>\cite{fornasier2021convergence} by proving rigorous convergence in the mean-field limit. In this case, there is no dependence of 
	the decay rate \eqref{eq:est1} on the dimension $d$. Following the same arguments, we expect that it is also possible to extend the previous theorem to the anisotropic case. We leave the details of this extension to further research. Numerically, however we will explore this extension in several test cases.
\end{remark}

\subsection{Convergence for exact penalization $\b\geq \bar \b$}

We start by analyzing the case where, at a certain (time) step $k_0$, we have $\beta_{k_0} \geq \bar \b$ and so the minimum of the penalty subproblem is equivalent to the minimum of the constrained problem according to $(A1.1)$. Note that typically $\bar \b$ is not known.

 In these settings, $\b_k \geq \bar \b$ for all $k \geq k_0$ and in particular $x^*_{\beta_k} = x^*$ is independent of $\b_k$ and is the solution to \eqref{pb}. 

\begin{proposition} 
Let $k_0$ be at time step such that $\b_{k_0}\geq \bar \b$. Assume (A1)--(A3) for $P(x, \beta_{k_0})$ and $f_{k_0}$ to be such that
\be \notag
x^* \in \textup{supp}(f_{k_0})\,.
\ee
For a given accuracy $\ve \in (0, \mathcal{V}(t_{k_0}))$, $\mathcal{V}(T^*) =  \ve$ for some time $T^* \in [0, K \Delta t] $, if $\a$ and $K$ are large enough.
\label{p:1}
\end{proposition}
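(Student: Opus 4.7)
The plan is to iterate \cref{t:conv} on each subinterval $[t_{k-1}, t_k]$ for $k \geq k_0 + 1$, exploiting the assumption $\b_{k_0} \geq \bar\b$ to identify the penalty-subproblem minimizer with $x^*$. Concretely, assumption (A1.1) together with the monotonicity of $\b_k$ gives $x^*_{\b_k} = x^*$ for every $k \geq k_0$, so that the auxiliary functional $\mathcal{V}_{\b_k}(t)$ from \cref{t:conv} and the target $\mathcal{V}(t)$ from \eqref{eq:V} coincide on each such subinterval.

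First, I set up an induction on $k$ starting at $k = k_0$. The base step uses the hypothesis $x^* \in \textup{supp}(f_{k_0})$: applying \cref{t:conv} with penalty parameter $\b_{k_0+1}$ and initial datum $f_{k_0}$ on the interval $[t_{k_0}, t_{k_0+1}]$, with $\a$ above the threshold supplied by that theorem, yields the exponential decay \eqref{eq:est1} for $\mathcal{V}$ together with the persistence \eqref{eq:supp} of $x^*$ in $\textup{supp}(f(t))$ throughout the subinterval. The persistence at $t_{k_0+1}$ provides precisely the support hypothesis needed to reapply \cref{t:conv} on $[t_{k_0+1}, t_{k_0+2}]$ with the (possibly updated, still $\geq \bar\b$) parameter $\b_{k_0+2}$, and the induction continues.

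Chaining the per-interval exponential estimates yields
\[
\mathcal{V}(t_k) \;\leq\; \mathcal{V}(t_{k_0}) \exp\!\left(-\left(\lambda - \tfrac{d\sigma^2}{2}\right)(t_k - t_{k_0})\right)
\]
for every $k_0 \leq k \leq K$. Since $2\lambda > d\sigma^2$, choosing $K$ large enough forces $\mathcal{V}(t_K) < \ve$. The continuity of $t \mapsto \mathcal{V}(t)$, inherited from $f \in \mathcal{C}([0, K\Delta t], \mathcal{P}_4(\RR^d))$, together with the intermediate value theorem then produce $T^* \in [t_{k_0}, K\Delta t]$ with $\mathcal{V}(T^*) = \ve$.

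The main obstacle I expect is the \emph{uniform} choice of $\a$ across subintervals: in \cref{t:conv}, the threshold $\a_0$ depends on the underlying penalty function, which in principle varies with $\b_k$ along the iteration. Because the algorithm can continue to enlarge $\b_k$ past $\bar\b$ whenever the tolerance $1/\sqrt{\theta_k}$ becomes too strict, one has to argue that the conditioning constants $C'_1, R'_1$ and the outer value $P^\infty_{\b}$ in (A2)--(A3) remain controlled along the sequence $\{\b_k\}_{k \geq k_0}$, so that a common $\a$ works simultaneously for every subinterval. Exploiting the monotone structure $P(x, \b) = j(x) + \b\, r(x)$ and restricting the analysis, if necessary, to a uniform sublevel set of $P(\cdot, \bar\b)$, should allow such a uniform threshold to be extracted; this uniformity, rather than the decay itself, is the point that requires careful justification before concatenating the single-interval estimates.
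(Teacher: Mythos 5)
Your proposal is correct and follows essentially the same route as the paper: iterating \cref{t:conv} interval by interval, using the support persistence \eqref{eq:supp} and the fact that (A2)--(A3) survive the increase of $\b_k$ past $\b_{k_0}$, then chaining the exponential decay estimates. The paper phrases this as a proof by contradiction rather than a direct induction plus intermediate value argument, and it does not discuss the uniformity of the threshold $\a_0$ across the varying $\b_k$ (it simply writes ``provided that $\a$ is large enough''), so your closing remark flags a point the paper leaves implicit rather than a divergence in method.
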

\begin{proof}
By contradiction, let as assume that such a $T^*>0$ does not exists. Hence, the desired accuracy is not reached in all intervals $[t_k, t_{k+1}]$ for $k> k_0$. We note that, since $P(x, \beta)$ is increasing on $\beta$, (A2) and (A3) hold for all $P(x, \beta_k)$ with $\beta_k \geq \beta_{k_0}$. Also, from \eqref{eq:supp}, $x^* \in \textup{supp}(f_k)$ for all $k> k_0$, which means that we can iteratively apply Theorem \ref{t:conv} to obtain the decay estimate
\be
\label{eq:decay}
\mathcal{V}(t_k) \leq \mathcal{V}(t_{k-1})  \exp\left ( -\left(\lambda - \frac{d\sigma^2}{2}\right)\Delta t \right) \leq  \mathcal{V}(t_{k_0})  \exp\left ( -\left(\lambda - \frac{d\sigma^2}{2}\right)(k- k_0)\Delta t \right)
\ee
for all $k>k_0$, provided that $\a$ is large enough. Therefore, for $k$ sufficiently large we obtain a contradiction.
\end{proof}

If $\b_k\geq\bar \b$, then due to (A1.1) $x^*$ is feasible. Under assumption (A1.3), the penalty term $r(x)$ can be bounded by $\| x - x^*\|$. As a consequence, we obtain that the feasibility condition \eqref{eq:conmf} is satisfied until the desired accuracy is reached. However, this happens only if the particle distribution is sufficiently close to the minimizer.

\begin{proposition} Let the assumptions of Proposition \ref{p:1} hold.  If $\eta_{\theta}\leq \exp\left((\lambda - \frac{d\sigma^2}{2})\Delta t \right)$ and
\be
\mathcal{V}(t_{k_0}) \leq \frac{1}{ 2\theta_{k_0}}\,,
\label{eq:p32}
\ee
then the feasibility condition is satisfied for all $t_k \in [t_{k_0}, T^*]$.
\label{p32}
\end{proposition}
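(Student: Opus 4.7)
The plan is to prove by induction on $k \geq k_0$ the stronger claim $\mathcal{V}(t_k) \leq 1/(2\theta_k)$, which directly implies the feasibility condition \eqref{eq:conmf}.

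First, I would reduce the feasibility check to a bound on $\mathcal{V}$. Since $\b_{k_0} \geq \bar\b$, assumption (A1.1) guarantees that $x^* \in \MM$, so by (A1.3), $r(x) = \dist(x, \MM) \leq \|x - x^*\|$ pointwise. A Jensen/Cauchy--Schwarz step then yields
\[
\int r(x)\, \d f_k(x) \leq \sqrt{\int \|x - x^*\|^2\, \d f_k(x)} = \sqrt{2\, \mathcal{V}(t_k)}\,,
\]
so $\mathcal{V}(t_k) \leq 1/(2\theta_k)$ is enough to obtain \eqref{eq:conmf}.

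Next, I would set up the induction. The base case $k = k_0$ is precisely hypothesis \eqref{eq:p32}. For the inductive step, assume $\mathcal{V}(t_k) \leq 1/(2\theta_k)$; then the feasibility check passes at time $t_k$, and Algorithm \ref{alg:mf} sets $\b_{k+1} = \b_k$ and $\theta_{k+1} = \eta_\theta \theta_k$. Consequently the penalty parameter remains equal to $\b_{k_0}$ throughout $[t_{k_0},T^*]$, so the hypotheses of Theorem \ref{t:conv} apply on each interval $[t_k, t_{k+1}]$ with a fixed $P(\cdot,\b_{k_0})$. The one-step contraction \eqref{eq:est1} gives
\[
\mathcal{V}(t_{k+1}) \leq \mathcal{V}(t_k)\exp\!\Big(-\!\Big(\lambda - \tfrac{d\sigma^2}{2}\Big)\Delta t\Big),
\]
and chaining with $\theta_{k+1} = \eta_\theta\theta_k$ together with the standing assumption $\eta_\theta \leq \exp((\lambda - d\sigma^2/2)\Delta t)$ yields $\mathcal{V}(t_{k+1}) \leq 1/(2\theta_{k+1})$, which closes the induction.

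The main step to be careful about is maintaining the running requirement $x^* \in \textup{supp}(f_k)$ needed to invoke Theorem \ref{t:conv} at each step. This propagates inductively via \eqref{eq:supp}, and it crucially relies on the penalty parameter — and hence the location of the minimizer $x^*_{\b_k} = x^*$ — remaining unchanged on $[t_{k_0}, T^*]$, which is exactly the feature guaranteed by the inductive hypothesis. The coupling between persistence of feasibility and stability of the target $x^*$ is what makes the induction self-contained, and the hypothesis on $\eta_\theta$ is precisely the threshold at which the tolerance decay $1/\sqrt{\theta_k}$ does not outrun the CBO contraction rate.
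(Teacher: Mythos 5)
Your proof is correct and follows essentially the same route as the paper's: reduce the feasibility check to $\int r\,\d f_k \le \sqrt{2\mathcal{V}(t_k)}$ via (A1.3) and Jensen, then play the exponential decay \eqref{eq:est1} against the geometric update $\theta_{k+1}=\eta_\theta\theta_k$ under the stated bound on $\eta_\theta$; the paper simply chains the decay from $t_{k_0}$ directly rather than running an induction. One small inaccuracy in your framing: the argument does not ``crucially rely'' on $\b_k$ staying equal to $\b_{k_0}$ --- in the regime of Proposition \ref{p:1} one has $\b_k\ge\bar\b$ for all $k\ge k_0$ since $\b_k$ is non-decreasing, so $x^*_{\b_k}=x^*$ and (A2)--(A3) persist whether or not the parameter is updated, which is why the paper needs no such coupling between the feasibility outcome and the decay estimate.
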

\begin{proof}
Since $x^* \in \mathcal{M}$ and due to (A1.3)  $r(x) = \text{dist}(x, \mathcal{M}) \leq \|x - x^*\|$. Together with the Jensen's inequality, this yields to 
\be
\notag
\int r(x) \d f_k (x) \leq \int \| x- x^*\| \d f_k (x) \leq \left( \int \| x - x^*\|^2 \d f_k(x) \right)^{\frac12} = \sqrt{ 2 \mathcal{V}(t_k)}\,.
\ee 
By the choice of $\eta_\theta$,
\begin{equation*}
e^{-\left (\lambda - \frac{d\sigma^2}{2}\right )(k - k_0) \Delta t} \leq \eta^{-(k - k_0)}\quad \forall \; k \geq k_0 \,.
\end{equation*}

Using this bound and the decay rate \eqref{eq:decay} in the inequality above, we obtain
\begin{align*}
\int r(x) \d f_k (x)  \leq  \left( 2 \mathcal{V}(t_{k_0}) e^{-\left (\lambda - \frac{d\sigma^2}{2}\right )(k - k_0) \Delta t}\right)^{\frac12}  \leq  \left( \frac{2 \mathcal{V}(t_{k_0})} {\eta^{k - k_0}} \right)^{\frac12}
\leq \frac{1}{\sqrt{\theta_{k_0}\eta^{k - k_0}}} \,,
\end{align*}
where we used assumption \eqref{eq:p32}. The latter allows to show that the particle distribution $f_{k_0}$ is concentrated close to the minimizer. We conclude by noting that $\theta_k \leq \theta_{k_0} \eta_{\theta}^{k-k_0}$ for all $k \geq k_0$.
\end{proof}

In the remaining part of the analysis, we will consider the case where at some $ k$, $\b_k$ is smaller than $\bar \b$. In particular, we show that then the feasibility condition will be necessarily violated at some subsequent time step. If this happens, the penalty parameter will be updated and eventually go beyond the threshold value $\bar \b$. Due to (A1.1)  and  Proposition \ref{p:1}, $f$ then concentrates close to the solution $x^*$ of the constrained  problem \eqref{pb}.

\subsection{Stability estimate of the constraint violation} 

First, we collect some stability estimates on the constraint violation \eqref{eq:conmf}.

As the penalty term $r(x)$ is not differentiable, we  approximate $r(x)$ by a function that belongs to the space $\mathcal{C}_*^2(\RR^d)$. In this way, the large time behavior of $\int r \,\d f(t)$ using the definition of weak solution \eqref{eq:ws} is estimated.

We perform approximation by mollification by means of the well-studied mollifier $\phi \in \mathcal{C}^{\infty}(\RR^d)$

\begin{equation*}
\phi(x) := 
	\begin{cases}
	C_{\phi} \exp \left( - \frac1{\|x\|^2 - 1} \right), & \text{if} \; \|x\| <1 \\
	0, & \text{if}\;\|x\| \geq 1\,,
	\end{cases}
\end{equation*}
where $C_{\phi}$ is chosen such that $\int_{\RR^d} \phi(x) dx = 1$. For a certain $\e>0$ we set $\phi_\e$ to be
\begin{equation*}
\phi_\e(x) = \frac1 {\e^d} \phi \left(\frac x \e\right)\quad \text{and define} \quad  r^{\e} := \phi_\e \ast r\,,
\end{equation*}
where $\ast$ denotes the convolution on $\RR^d$. 

For a given $f \in \mathcal{P}_2(\RR^d)$, it holds 
\be 
\left | \int r(x) \d f(x) -  \int r^\e(x) \d f(x) \right | \leq \e. 
\ee
and we have  
$r^\e \in C_*^2(\RR^d)$. 

\begin{lemma}\label{l:mol}
For $\e>0$, the following estimates hold:
\begin{align*} 
\| \nabla r^\e(x) \| \leq 1 \quad \text{and} \quad 
| \Delta r^\e(x) | \leq \frac{d C}{\e^2} 
\quad \text{for all}\quad  x \in \RR^d
\,,
\end{align*}
from which follows $r^{\e} \in C_*^2(\RR^d)$.
\end{lemma}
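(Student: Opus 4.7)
The plan is to exploit the fact that $r(x) = \dist(x, \MM)$ is 1-Lipschitz (a standard consequence of the triangle inequality applied inside the $\min$ definition), together with the standard scaling of the mollifier $\phi_\e$.

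For the gradient bound, I would not attempt to bound $\nabla r^\e(x) = \int \nabla \phi_\e(x-y) r(y)\, dy$ directly, since $r$ is unbounded at infinity. Instead, I would observe that mollification by a nonnegative kernel of unit mass preserves Lipschitz constants: a triangle-inequality estimate inside the convolution,
\begin{equation*}
|r^\e(x) - r^\e(y)| \leq \int \phi_\e(z) |r(x-z) - r(y-z)|\, dz \leq \|x - y\|,
\end{equation*}
shows that $r^\e$ is itself 1-Lipschitz. Since $r^\e$ is smooth, this immediately forces $\|\nabla r^\e(x)\| \leq 1$ pointwise.

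For the Laplacian, the same ``unboundedness of $r$'' issue appears in the direct formula $\Delta r^\e(x) = \int \Delta \phi_\e(x-y) r(y)\, dy$. The key trick is to subtract a constant: by the divergence theorem applied to $\nabla \phi_\e$, which vanishes outside the ball of radius $\e$, we have $\int \Delta \phi_\e(z)\, dz = 0$, so we may replace $r(y)$ by $r(y) - r(x)$ without changing the integral. Then the Lipschitz bound $|r(y) - r(x)| \leq \|y - x\| \leq \e$ on the support of $\phi_\e(x - \cdot)$, combined with the scaling $\Delta \phi_\e(z) = \e^{-d-2} (\Delta \phi)(z/\e)$ and a change of variables, produces a uniform bound of order $\e^{-2}$ (with a dimensional factor coming from summing over the $d$ diagonal Hessian entries).

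Finally, the pointwise bounds $\|\nabla r^\e(x)\| \leq 1 \leq C(1 + \|x\|)$ and $|\Delta r^\e(x)| \leq dC/\e^2$ together verify the two defining conditions of $\mathcal{C}_*^2(\RR^d)$. I do not anticipate any real obstacle: the lemma is a standard mollification estimate, and the only point that needs mild care is the centering step in the Hessian bound, which crucially relies on the compact support of $\phi_\e$ so that $\int \Delta \phi_\e = 0$.
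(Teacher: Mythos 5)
Your proof is correct, but it takes a genuinely different route from the paper's. The paper keeps the derivatives on $r$: it invokes Danskin's theorem to get an a.e.\ representation of the directional derivatives $Dr(x,v)$ of the min-function $\dist(\cdot,\MM)$ (using that $\partial\MM$ has measure zero), cites a lemma on mollification of directional derivatives to write $\nabla r^\e(x)\cdot v=\int\phi_\e(z)\,Dr(x-z,v)\,\d z$, and then for the Hessian shifts one derivative onto the mollifier and estimates $\int\|\nabla\phi_\e\|\leq C/\e^2$. You instead put all derivatives on the mollifier and use only the global $1$-Lipschitz property of the distance function: the gradient bound follows because convolution with a nonnegative unit-mass kernel preserves Lipschitz constants, and the Laplacian bound follows from the centering trick $\int\Delta\phi_\e=0$ (valid by compact support), which correctly resolves the unboundedness of $r$ at infinity. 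Your argument is more elementary --- it needs neither Danskin's theorem, nor the mollification-of-directional-derivatives lemma, nor the hypothesis $\d x(\partial\MM)=0$ --- and if you carry out the change of variables you will find it actually yields the sharper estimate $|\Delta r^\e(x)|\leq C/\e$ (the factor $\|z\|\leq\e$ from the Lipschitz bound cancels one power of $\e^{-1}$ from the scaling $\Delta\phi_\e(z)=\e^{-d-2}(\Delta\phi)(z/\e)$), which implies the stated $dC/\e^2$ bound a fortiori and would even slightly improve the constants downstream in Lemma \ref{l:1}. The only thing the paper's heavier machinery buys is the explicit formula \eqref{eq:direct} for $\nabla r^\e\cdot v$ in terms of the projection directions $(x-z-y)/\|x-z-y\|$, which is not actually needed for the quantitative estimates of this lemma.
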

\begin{proof}
Even though $r(x)$ is not differentiable, for a given direction $v \in \RR^d$, the directional derivative $Dr(x, v)$ exists almost everywhere. This will allow us to derive an explicit definition of $\nabla r^\e(x)\cdot v$ in terms of $Dr(x,v)$.

Let $x$ belong to the interior of $\MM$, $x \in \MM^o$. Clearly, $Dr(x, v) = 0$ for all $v \in \RR^d$. For $x \in \MM^c$, we note that
$g_y(x) := \| x- y\|$ is $C^2$ in a neighborhood of $x$ for $y \in \MM$ and that the penalty term can be written as $r(x) = \min_{y \in \MM}g_y(x)$. Under these settings, there exists an explicit representation of the directional derivative, cf.^^>\cite[Theorem 1, pp. 22]{Danskin1967TheTO}.
Indeed, let $Y(x)$ be the set of those $y \in \MM$ which yield the minimum to $\| x- y\|$ for $x$ fixed, $Y(x) = \{ y \in \mathcal{M}\,|\, r(x) = \| x- y\|\}$. For all $v \in \RR^d$, the directional derivative is then given by

\be \notag
Dr (x, v) = \min_{y \in Y(x)}\nabla g_y(x)  \cdot v = \min_{y \in Y(x)}\frac{(x - y)}{\|x -y\|}\cdot v\,.
\ee 

We recall $\partial \MM$ has Lebesgue measure zero, and hence $Dr(x, v)$ is defined almost everywhere. Additionally, $Dr(x, v)$ is bounded a.e.: $|Dr(x,v)| \leq \| v \|$.
 Under these assumptions, the directional derivative of $r^\e(x)$ is expressed as a mollification of the directional derivatives of $r(x)$:

\be
\nabla r^\e(x) \cdot v = \int \phi_\e(z) Dr(x-z, v) \,\d z = \int \phi_\e(z) \min_{y \in Y(x-z)}\frac{(x -z - y)}{\|x-z -y\|}\cdot v \,\d z \,.
\label{eq:direct}
\ee

We refer to^^>\cite[Lemma 3.10]{jongen2008a} for a proof of \eqref{eq:direct}. As a direct consequence, we obtain the bound $\| \nabla r^\e(x) \| \leq 1$.

In order to bound the second order derivatives of $r^\e(x)$, let us consider the gradient of $\nabla r^\e(x)\cdot v$, for a fixed $v \in \RR^d$. By the dominated convergence theorem, we rewrite the gradient as

\begin{align*}
 \nabla^2 r^\e(x)\cdot v &= \nabla_x \int \phi_\e(z) D r(x-z, v)\, \d z  = \nabla_x \int \phi_\e(x-z) D r(z, v)\, \d z  \\
&= \int \nabla_x(\phi_\e(x-z)) D r(z, v) \, \d z 
\end{align*}
and obtain
\be \notag
  \| \nabla^2 r^\e(x)\cdot v \| \leq  \int \|\nabla_x(\phi_\e(x-z))\| |D r(z, v)|\, \d z 
 \leq \|v\|\,  \int \|\nabla_x(\phi_\e(x-z))\| \,\d z \,.
\ee

We recall that $\nabla_y \phi(y) =  - 2 C_{\phi} \phi(y) y (\|y\|^2 -1)^{-2}$ and it  follows
\begin{align*}
\nabla_y \phi_\e(y) = - \frac{2C_\phi}{\e^d} \phi(y/\e)\frac{y/\e}{(\|y/\e\|^2 -1)^2} \frac 1\e\,.
\end{align*}
Therefore,
\begin{align*}
 \int \|\nabla_y(\phi_\e(y))\| \,\d y & \leq \int_{B(0,\e)} \frac{2C_\phi}{\e^{d+1}} \phi(y/\e)\frac{\|y/\e\|}{(\|y/\e\|^2 -1)^2}\,  \d y \\
 & = \int_{B(0,1)}  \frac{2C_\phi}{\e^{d+1}} \phi(y')\frac{\|y'\|}{(\|y'\|^2 -1)^2} \e^{d-1}\, \d y'  \;\leq\; \frac{C}{\e^2} \;\;
\end{align*}
for some positive constant $C>0$, where we used that $\phi(y')\, \|y'\|\, (\|y'\|^2 -1)^{-2}$ is bounded.

In particular, we have $\| \nabla^2 r^\e(x) \|_2 \leq C/\e^2$ and  $|\Delta r^\e| \leq d C/ \e^2$.
\end{proof}

Given a certain time interval $[ t_k, t_{k+1}]$, we are now able to bound the constraint violation $\int r \, \d f(t^*)$ for $t \in (t_k, t_{k+1})$.

\begin{lemma}
Let $ t^* \in (t_k, t_{k+1})$ and let the estimates \eqref{eq:est1},\eqref{eq:est2} of  Theorem \ref{t:conv} hold true for all $t \in (t_k, t^*)$.
Then 
\be
\left |  \int r^\e(x) \, \d f(t_k,x) - \int r^\e(x) \, \d f(t^*,x) \right |  \leq \left (\lambda + \frac{ d \sigma}{2 \e^2}C_1 \right) C_2 \max\left \{\mathcal{V}_\beta(t_k), \sqrt{\mathcal{V}_\beta(t_k)}\right\}  \, \Delta t
\ee
for some constants $C_1, C_2 >0$.
\label{l:1}
\end{lemma}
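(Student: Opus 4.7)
The plan is to use $r^\e$ as a test function in the weak Fokker--Planck formulation \eqref{eq:ws} and to integrate in time over $[t_k, t^*]$. The admissibility of this choice is exactly what Lemma \ref{l:mol} supplies: from $\|\nabla r^\e\| \leq 1$ and $|\Delta r^\e| \leq dC/\e^2$ one has $r^\e \in \mathcal{C}^2_*(\RR^d)$, so \eqref{eq:ws} is applicable.

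Applying \eqref{eq:ws}, integrating from $t_k$ to $t^*$, and bringing the absolute value inside the time integral, the displacement
\begin{equation*}
\int r^\e\,\d f(t^*,x) - \int r^\e\,\d f(t_k,x)
\end{equation*}
is controlled by a linear combination of $\int_{t_k}^{t^*}\!\int \|x - x_\a(f)\|\,\d f(t,x)\,dt$ and $\int_{t_k}^{t^*}\!\int \|x - x_\a(f)\|^2\,\d f(t,x)\,dt$, with coefficients $\lambda$ and $\sigma^2 dC/(2\e^2)$ respectively, thanks to the pointwise bounds of Lemma \ref{l:mol}. To re-express these moments in terms of $\mathcal{V}_\beta(t)$, I would split $\|x - x_\a(f)\| \leq \|x - x^*_\beta\| + \|x^*_\beta - x_\a(f)\|$, control the last factor via \eqref{eq:est2}, and apply Jensen's inequality for the first-order moment, obtaining
\begin{equation*}
\int \|x - x_\a(f)\|^2\,\d f(t,x) \leq C'\,\mathcal{V}_\beta(t), \qquad \int \|x - x_\a(f)\|\,\d f(t,x) \leq C'\sqrt{\mathcal{V}_\beta(t)}.
\end{equation*}

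To conclude, I would invoke the exponential decay \eqref{eq:est1}, which in particular gives $\mathcal{V}_\beta(t) \leq \mathcal{V}_\beta(t_k)$ for $t \in [t_k, t^*]$, so the $t$-dependence in the integrand can be replaced by its value at $t_k$. Since $t^* - t_k \leq \Delta t$, the linear and quadratic contributions combine into a single factor $\max\{\sqrt{\mathcal{V}_\beta(t_k)},\,\mathcal{V}_\beta(t_k)\}$ multiplied by $\Delta t$, yielding the stated bound. The only genuinely subtle point is verifying that $r^\e$ belongs to the test-function class $\mathcal{C}^2_*$, which is precisely what Lemma \ref{l:mol} provides through the Danskin-type representation of $\nabla r^\e$; the rest of the argument is bookkeeping with the triangle inequality, Jensen's inequality, and monotonicity of $\mathcal{V}_\beta$ along the CBO flow.
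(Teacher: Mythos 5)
Your proposal is correct and follows essentially the same route as the paper's proof: test the weak formulation \eqref{eq:ws} with the mollified penalty $r^\e$, use the bounds $\|\nabla r^\e\|\leq 1$ and $|\Delta r^\e|\leq dC/\e^2$ from Lemma \ref{l:mol}, control the first and second moments of $\|x-x_\a(f)\|$ via \eqref{eq:est2} together with Jensen's inequality, and replace $\mathcal{V}_\beta(t)$ by $\mathcal{V}_\beta(t_k)$ using the monotonicity from \eqref{eq:est1} before integrating over an interval of length at most $\Delta t$. The only cosmetic difference is that the paper first establishes the differential inequality and then integrates, whereas you integrate first and then bound; these are equivalent.
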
 
\begin{proof}
We start by applying the definition of weak solution \eqref{eq:ws} and the Cauchy-Schwartz inequality
\begin{align*}
\frac{d}{dt} \int r^\e (x) \,\d f(t,x) &= - \lambda \int \nabla r^\e(x)\cdot (x-x_\a) \, \d f(t,x) + \frac{\sigma^2}{2} \int \Delta r^\e (x) \| x - x_\a\|^2\,\d f(t,x) \\
& \leq \lambda \int \| x- x_\a\| \d f(t,x) + \frac{d\sigma^2}{2 \e^2} C_1 \int \| x- x_\a\|^2\, \d f(t,x)\,.
\end{align*}
By  estimate \eqref{eq:est2},
\begin{align}
\int \| x - x_\a \|^2 \, \d f(t,x) &\leq \int \|x-x_\beta^*\|^2\, \d f(t,x)  +  \|x_\beta^* - x_\a \|^2 \notag \\
&\leq C' \mathcal{V}_\beta(t) + 2\mathcal{V}_\beta(t) \leq (C'+2) \mathcal{V}_\beta(t_k)\, \label{eq:est3}
\end{align}
since $\mathcal{V}_\beta$ is non-increasing in the interval $(t_k, t^*)$, see \eqref{eq:est1}.
Altogether, we obtain for some constant $C_2>0$

\be \notag
 \frac{d}{dt} \int r^\e(x) \, \d f(t,x) \leq \left (\lambda + \frac{ d \sigma}{2 \e^2}C_1 \right) C_2 \max \left \{\mathcal{V}_\beta(t_k), \sqrt{\mathcal{V}_\beta(t_k)}\right \} 
\ee
which yields
\be
\left |  \int r^\e(x) \, \d f(t_k,x) - \int r^\e(x) \, \d f(t^*,x) \right |  \leq \left (\lambda + \frac{ d \sigma}{2 \e^2}C_1 \right) C_2 \max\left \{\mathcal{V}_\beta(t_k), \sqrt{\mathcal{V}_\beta(t_k)}\right\}  \; \Delta t\, . 
\notag
\ee

\end{proof}

\subsection{The case with penalty parameter update}

Before proving Theorem \ref{t:main} we first consider the case when the penalty subproblem is not equivalent to \eqref{pb}, i.e.,  $\b_{k_0} < \bar \b$ at a certain step $k_0$. We show that  $\b_k$ will necessary increase at a subsequent time $t_k$, $k>k_0$. The proof will be done by contradiction.  

Indeed, if the penalty parameter $\b_k=\b_{k_0}$ remains constant for all $t\geq t_{k_0}$, by applying Theorem \ref{t:conv}, we know that the density $f$ will converge to the minimizer $x_0^*$ of $P(x, \b_{k_0})$.

\par 
Since $\b = \b_{k_0} < \bar \b$, $x_0^*$  is not feasible and we expect the feasibility condition to be violated. This is summarized in the following proposition.  

\begin{proposition}
Let $k_0$ be a time step such that $\b_{k_0}< \bar \b$. Assume (A1)--(A3) for $P(x, \beta_{k_0})$ and $f_{k_0}$
 be such that
\be \notag
x_0^* \in \textup{supp}(f_{k_0})
\ee
where $x_0^* = \amin_{x \in \RR^d} P(x, \beta_{k_0})$.

Then, if 
\be r(x_0^*) > 1/\sqrt{\theta_0}\,,
\label{eq:infeas}
\ee the feasibility condition will be violated at some $k>k_0$, provided that $\a$ and $K$ are sufficiently large.
\label{p:2}
\end{proposition}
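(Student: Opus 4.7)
The plan is to argue by contradiction: suppose the feasibility condition \eqref{eq:conmf} holds at every step with $k_0 < k \leq K$. By the update rule in Algorithm \ref{alg:mf}, this forces $\beta_k = \beta_{k_0}$ for all such $k$, while $\theta_k = \theta_{k_0}\,\eta_\theta^{\,k-k_0}$, so the tolerance $1/\sqrt{\theta_k}$ decays geometrically to zero as $k$ grows.

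Since $\beta$ is frozen at $\beta_{k_0}$ and (A2)--(A3) are assumed for $P(\cdot,\beta_{k_0})$ with minimizer $x_0^* \in \textup{supp}(f_{k_0})$, I would apply Theorem \ref{t:conv} on each subinterval $[t_{k-1},t_k]$ and iterate in exactly the same fashion as in the proof of Proposition \ref{p:1}. The support property \eqref{eq:supp} propagates $x_0^* \in \textup{supp}(f_k)$ through every step, so the iteration is legitimate, and for $\alpha$ large enough it yields
\[
\mathcal{V}_\beta(t_k) \;\leq\; \mathcal{V}_\beta(t_{k_0})\,\exp\!\bigl(-(\lambda - d\sigma^2/2)(k-k_0)\Delta t\bigr) \;\longrightarrow\; 0.
\]
To convert this second-moment decay into control of the constraint violation, I would invoke (A1.3): $r(x)=\textup{dist}(x,\MM)$ is $1$-Lipschitz as the distance to a non-empty set, so Jensen's inequality gives
\[
\Bigl|\,\int r(x)\,\d f_k(x) - r(x_0^*)\,\Bigr| \;\leq\; \int \|x-x_0^*\|\,\d f_k(x) \;\leq\; \sqrt{2\,\mathcal{V}_\beta(t_k)} \;\longrightarrow\; 0,
\]
and therefore $\int r\,\d f_k \to r(x_0^*)$.

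Since the hypothesis \eqref{eq:infeas} gives $r(x_0^*) > 1/\sqrt{\theta_0} > 0$, while the standing feasibility assumption forces $\int r\,\d f_k \leq 1/\sqrt{\theta_k} \to 0$, the two cannot coexist, and taking $K$ large enough the feasibility check must be violated at some $k > k_0$, as claimed. The only delicate point is that a single value of $\alpha$ must work uniformly across the whole chain of applications of Theorem \ref{t:conv}; because $\beta$ and $P(\cdot,\beta_{k_0})$ are fixed throughout the contradiction scenario and the decay rate $\lambda - d\sigma^2/2 > 0$ is $k$-independent, this is a mild uniformity check rather than a substantive obstacle. The mollification machinery of Lemma \ref{l:1} is not needed here; it becomes essential only in the combined proof of Theorem \ref{t:main}, where $\beta$ genuinely changes between steps so one can no longer freeze the reference minimizer $x_0^*$.
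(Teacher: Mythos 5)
Your overall strategy coincides with the paper's: argue by contradiction, note that $\beta$ stays frozen at $\beta_{k_0}$, let the density concentrate at the infeasible minimizer $x_0^*$, and use the $1$-Lipschitz property of $r(x)=\dist(x,\MM)$ together with Cauchy--Schwarz to force $\int r\,\d f$ towards $r(x_0^*)>1/\sqrt{\theta_0}$, which is incompatible with the feasibility check; the bound $\int r\,\d f\geq r(x_0^*)-\sqrt{2\mathcal{V}_0}$ is exactly the paper's estimate \eqref{eq:lb}. However, there is a genuine gap in the step claiming $\mathcal{V}_0(t_k)\leq \mathcal{V}_0(t_{k_0})\exp(-(\lambda-d\sigma^2/2)(k-k_0)\Delta t)\to 0$ along the discrete checkpoints. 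Theorem \ref{t:conv} guarantees the exponential decay \eqref{eq:est1} only on $[0,T^*]$, where $T^*$ is the \emph{continuous} time at which a prescribed accuracy $\varepsilon$ is first attained; beyond $T^*$ the cited result gives no control on $\mathcal{V}_0$ at all, and $T^*$ generically falls strictly inside an interval $(t_k,t_{k+1})$, i.e.\ it is not one of the times at which the feasibility condition \eqref{eq:conmf} is tested. In Proposition \ref{p:1} the iteration is legitimate because the contradiction hypothesis there is precisely that the accuracy is never reached, so the decay never switches off; your contradiction hypothesis (feasibility always holds) does not imply this, so you cannot iterate ``in exactly the same fashion.''

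The paper resolves this mismatch between the continuous concentration time $T^*$ and the discrete checkpoints by establishing the lower bound at $T^*$ itself and transporting the feasibility bound from the last checkpoint $t_k<T^*$ forward to $T^*$ --- which is exactly what the mollification estimate of Lemma \ref{l:1}, combined with the backward Gr\"onwall bound of Lemma \ref{l:2}, is for. Your closing remark that Lemma \ref{l:1} ``is not needed here'' and only matters for Theorem \ref{t:main} therefore misidentifies where the difficulty lies: in the paper it is used only in this proposition. The gap is reparable, and in fact more cheaply than in the paper: Lemma \ref{l:2} gives $\mathcal{V}_0(t_k)\leq \mathcal{V}_0(T^*)e^{C\Delta t}=\varepsilon e^{C\Delta t}$ for the last checkpoint $t_k<T^*$, and applying your reverse-triangle-inequality bound at $t_k$ (where feasibility is actually checked) yields $r(x_0^*)\leq 1/\sqrt{\theta_k}+\sqrt{2\varepsilon}\,e^{C\Delta t/2}$, hence the contradiction with \eqref{eq:infeas} for $\varepsilon$ small, with no mollification at all. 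But as written, the assertion that $\mathcal{V}_0$ is small at a checkpoint is unsupported.
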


For notational simplicity, in the following we denote $\mathcal{V}_{\beta_{k_0}}(t)$ with $\mathcal{V}_0(t)$. We first prove the following auxiliary lemma.
\begin{lemma}
	Assume (A1)--(A3) for $P(x, \beta_{k_0})$.  Let $ t^* \in (t_k, t_{k+1})$ and let the estimates \eqref{eq:est1},\eqref{eq:est2} of Theorem \ref{t:conv} hold for all $t \in (t_k, t^*)$.
Then 
\be \notag
\mathcal{V}_0(t_k) \leq \mathcal{V}_0(t^*) e^{C \Delta t}\,,
\ee
for a $C>0$.
\label{l:2}
\end{lemma}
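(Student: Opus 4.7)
My plan is to establish a reverse Gronwall-type estimate for $\mathcal{V}_0(t)$ on the interval $[t_k,t^*]$ by bounding its time derivative from below by $-C\mathcal{V}_0(t)$. The key idea is that while Theorem \ref{t:conv} provides the upper bound $\mathcal{V}_0(t^*) \le \mathcal{V}_0(t_k) e^{-(\lambda - d\sigma^2/2)\Delta t}$, the dynamics of $\mathcal{V}_0$ cannot be faster than exponential in either direction because both the drift and diffusion terms in \eqref{eq:pde} scale with $\|x - x_\alpha\|$, which is itself controlled by $\mathcal{V}_0$ via the estimate \eqref{eq:est2}.

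Concretely, I would test the weak formulation \eqref{eq:ws} against the function $\phi(x)=\tfrac12\|x-x_0^*\|^2$. This $\phi$ lies in $\mathcal{C}_*^2(\RR^d)$ since $\nabla\phi(x)=x-x_0^*$ has linear growth and $\Delta\phi(x)=d$ is constant. This yields
\[
\frac{d}{dt}\mathcal{V}_0(t) = -\lambda\int (x-x_\alpha(f))\cdot(x-x_0^*)\,df(t,x)+\frac{d\sigma^2}{2}\int\|x-x_\alpha(f)\|^2\,df(t,x).
\]
Since the diffusion term is nonnegative, it may be dropped to produce a lower bound. For the drift term I would decompose $x-x_\alpha = (x-x_0^*) + (x_0^*-x_\alpha)$, giving
\[
\int (x-x_\alpha)\cdot(x-x_0^*)\,df = 2\mathcal{V}_0(t) + (x_0^*-x_\alpha)\cdot\int(x-x_0^*)\,df(t,x).
\]

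To control the cross term, I would use the Cauchy--Schwarz (or Jensen) inequality, which gives $\|\int(x-x_0^*)df\| \le \sqrt{2\mathcal{V}_0(t)}$, combined with \eqref{eq:est2} from Theorem \ref{t:conv}, namely $\|x_0^*-x_\alpha(f)\|\le C\sqrt{\mathcal{V}_0(t)}$ (which is assumed to hold throughout $(t_k,t^*)$). This bounds the cross term by $C\sqrt{2}\,\mathcal{V}_0(t)$ and therefore
\[
\frac{d}{dt}\mathcal{V}_0(t)\ge -\lambda\bigl(2+C\sqrt{2}\bigr)\mathcal{V}_0(t)=: -\tilde C\,\mathcal{V}_0(t).
\]
Applying Gronwall's inequality on $[t_k,t^*]\subset[t_k,t_{k+1}]$ then yields $\mathcal{V}_0(t^*)\ge \mathcal{V}_0(t_k)\,e^{-\tilde C(t^*-t_k)}\ge \mathcal{V}_0(t_k)\,e^{-\tilde C\Delta t}$, which rearranges to the claimed estimate with $C=\tilde C$.

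I do not anticipate any serious obstacle: all required ingredients (the weak formulation \eqref{eq:ws}, the bound \eqref{eq:est2}, and the admissibility of $\phi$ as a test function) are already established. The only point that needs care is that \eqref{eq:est2} must hold on the entire subinterval $(t_k,t^*)$ in order for the differential inequality to be valid pointwise in $t$, which is exactly the hypothesis of the lemma.
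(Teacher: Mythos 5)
Your proof is correct and follows essentially the same route as the paper: both test the weak formulation \eqref{eq:ws} against $\phi(x)=\tfrac12\|x-x_0^*\|^2$, derive the differential inequality $\frac{d}{dt}\mathcal{V}_0(t)\geq -C\,\mathcal{V}_0(t)$ using \eqref{eq:est2} to control $\|x_0^*-x_\alpha(f)\|$, and conclude by Gr\"onwall. The only (immaterial) difference is in bounding the drift term: the paper applies Young's inequality and the bound \eqref{eq:est3} on $\int\|x-x_\alpha\|^2\,\d f$, while you drop the nonnegative diffusion term and handle the cross term via Cauchy--Schwarz directly.
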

\begin{proof}
By the identity \eqref{eq:ws}, for $t \in (t_k,t^*)$
\begin{align*} \notag
\frac{d}{dt} \mathcal{V}_0(t)  &= - \lambda \int (x-x_0^*) \cdot (x - x_\a) \, \d f(t, x) + \frac{d \sigma^2}{2} \int \| x - x_\a\|^2 \, \d f(t,x) \\
&\geq - \frac \lambda 2 \int \| x - x_0^*\|^2 \, \d f(t, x)  - \left ( \frac \lambda 2 +\frac{d \sigma^2}{2} \right) \int \| x - x_\a\|^2 \, \d f(t,x)\,,
\end{align*}
where we applied Young's inequality.

As before, see \eqref{eq:est3}, we bound the second integral in terms of $\mathcal{V}_0(t)$ and obtain for some constant $C$, 

\be
\frac{d}{dt} \mathcal{V}_0(t) \geq - C \mathcal{V}_0(t)\, .
\ee

We use Grönwall's inequality to obtain the desired estimate:
\be
\mathcal{V}_0(t^*) \geq \mathcal{V}_0(t_k)e^{-C(t^* - t_k)} \quad \Rightarrow \quad  \mathcal{V}_0(t_k) \leq \mathcal{V}_0(t^*) e^{C(t_k-t^*)} \leq \mathcal{V}_0(t^*) e^{C\Delta t}.
\ee
\end{proof}

\begin{proof}[Proof of Proposition \ref{p:2}]
By contradiction, let us assume that $\b_k = \b_{k_0}$ for all $k>k_0$.

Using the same iterative argument as in the proof of Proposition \ref{p:1}, for any given accuracy $\ve>0$, there exists an $\a$ and a time $T^*$ large enough such that $\mathcal{V}_0(T^*) = \ve$. At the time $T^*$, we bound the constraint violation from below
\begin{align}
\int r(x) \, \d f(T^*, x) &\geq  r(x_0^*) - \int \| x- x_0^*\| \, \d f(t, x)  \notag\\
&\geq r(x_0^*) - \sqrt{2\mathcal{V}_0(T^*)} \; \geq\; r(x_0^*) - \sqrt{2\ve}\,, \;\;
\label{eq:lb}
\end{align}
thanks to the triangular inequality $\text{dist}(x,\MM) \geq \dist(x_0^*, \MM) - \|x-x_0^*\|$.

Let now $t_k < T^*$ be such that $k = \argmax_{j} \{ j\,:\, j\Delta t = t_j <T^* \}$.  We  use of the mollification $r^\e$ to obtain an upper bound on the feasibility violation at the time $T^*$. It holds

\begin{align*}
\int r(x) \, \d f(T^*, x) &\leq \e  
 + \left | \int r^\e (x)\, \d f(T^*,x) - \int r^\e (x)\, \d f(t_k,x)\right |
+\e 
+ \int r (x) \d f(t_k,x)\\
&\leq 2\e + \frac 1{\sqrt{\theta_0}}  + \left | \int r^\e (x)\, \d f(T^*,x) - \int r^\e (x)\, \d f(t_k,x)\right | \,.
\end{align*}

By Lemma \eqref{l:1}, 
\begin{align*}
\left | \int r^\e (x)\, \d f(T^*,x) - \int r^\e (x)\, \d f(t_k,x)\right | &\leq
\left (\lambda + \frac{ d \sigma}{2 \e^2}C_1 \right) C_2 \max\left\{\mathcal{V}(t_k), \sqrt{\mathcal{V}(t_k)}\right\}  \; \Delta t \\
& \leq  \sqrt{\ve}\, 2 \left(\lambda + \frac{ d \sigma}{2 \e^2}C_1 \right) C_2 e^{\frac{C \Delta t}2}\Delta t\,,
\end{align*}
where we have used the estimate and constant $C$ of Lemma \ref{l:2}
\[ \mathcal{V}_0 (t_k) \leq \mathcal{V}_0(T^*) e^{C \Delta t} = \ve e^{C \Delta t}\,,\]
which is smaller than one, if $\ve$ is sufficiently small. Therefore, 
\be
\int r(x) \, \d f(T^*, x)  \leq  2\e + \frac 1{\sqrt{\theta_0}} + \sqrt{\ve} \, 2 \left(\lambda + \frac{ d \sigma}{2 \e^2}C_1 \right) C_2 e^{\frac{C \Delta t}2}\Delta t \notag
\ee
 and, if we choose $\e = \ve^{\frac18}$, we obtain
\be
\int r(x) \, \d f(T^*, x)  \leq \frac 1{\sqrt{\theta_0}} + o(\ve^{\frac14})\,.
\label{eq:ub}
\ee 

Putting together the previous bounds \eqref{eq:lb} and \eqref{eq:ub}, we have
\be  r(x_0^*) - \sqrt{2\ve} \leq \int r(x) \, \d f(T^*, x) \leq \frac 1{\sqrt{\theta_0}} + o(\ve^{\frac14})\,,
\notag
\ee
which yields to 
\be
r(x_0^*) \leq \frac 1{\sqrt{\theta_0}} + o(\ve^{\frac14})\, .
\label{eq:rest}
\ee

For sufficiently small $\ve$, \eqref{eq:rest} contradicts assumption \eqref{eq:infeas}.

\end{proof}

\begin{proof}[Proof of Theorem \ref{t:main}]

If $\b_0$ is such that $\b_0 \geq \bar \b$, since $x^*\in \textup{supp}(f_0)$ we can directly apply \cref{p:1} to conclude that there exists $K, \alpha$ large enough such that $\mathcal{V}(T^*) = \ve$ for some $T^* \in [0, K \Delta t]$.

If this is not the case, we recall from ^^>\cite[Proposition 19, Remark 20]{fornasier2021consensusbased} that, if $f$ is the solution of the Fokker--Planck equation \eqref{eq:pde} over $[0, \Delta t]$, with initial datum $f_0$ it holds
\be
\{x_{\b_k}^* \in \RR^d\, |\;\; \text{for all}\;\;\b_k < \bar \b  \} \cup \{x^*\} \subset \textup{supp}\left(f(t)\right) \quad \forall \; t \in [0, \Delta t]\,.
\ee
By iterating the above inclusion for all the time intervals $[t_{k-1}, t_k]$, we have that
\[\{x_{\b_k}^* \in \RR^d\, |\;\; \text{for all}\;\;\b_k < \bar \b  \} \cup \{x^*\} \subseteq \textup{supp}(f_k) \quad \text{for all}\quad 1\leq k \leq K \,,\]
from which follows in particular that
\[ x_{\beta_k}^* \in \textup{supp}(f_k) \quad \text{for all} \quad \b_k <\bar\b.\]

Let now $\theta_0$ be such that 
\[ r(x_{\b_k}^*) > \frac1{\sqrt{\theta_0}} \quad  \text{for all} \quad \b_k <\bar\b.
\]

By iteratively applying Proposition \ref{p:2} a finite number of times, we obtain that there exists $K_0, \alpha_0$ large enough such that $\beta_k \geq \bar \beta$ at some iteration $k$. As before, since $x^* \in  \textup{supp}(f_k)$, thanks to Proposition \ref{p:1} we can conclude that there exists $K >K_0 , \alpha > \alpha_0$ large enough such that $\mathcal{V}(T^*) = \ve$ for some $T^* \in [0, K \Delta t]$.
\end{proof}
Some remarks are in order.
\begin{remark}^^>
\begin{itemize}
	\item
	By definition of the Wasserstein-2 distance and of $\mathcal{V}(t)$, it holds $W_2^2(f(t), \delta_{x^*}) \leq 2 \mathcal{V}(t)$. In particular we obtain that \[ W_2^2(f(T^*), \delta_{x^*}) \leq 2 \ve\,.\]
		\item 
In step 5 of Algorithm \ref{alg:mf} the  Fokker--Planck equation \eqref{eq:pde}
needs to be integrated from time $t_k$ to $t_k+\Delta t.$ Note that our proof is independent of the numerical method adopted. Below, we use the particle description in Algorithm \ref{alg:iter} due to the high-dimensionality of the problem but in principle one can adopt other approximations. 
	\end{itemize}
\end{remark}

\section{Numerical Examples}

The aim of this section is to show the performance of the proposed optimization method on benchmark problems and study how  different parameters influence the evolution of the underlying particle system. We also compare the results of the numerical simulations with the theoretical analysis of Section \ref{sec:3} and, in particular, we show that the  algorithm is capable to properly update the penalty parameter $\b$ without knowledge on $\bar\b.$ 

We start by simulating the mean-field particles dynamics on a two-dimensional problem, and compare the simulation  to  the theoretical analysis. We will then validate Algorithm \ref{alg:iter} on four different problems in dimension $d=5$ for different parameters settings. Finally, we test the algorithm on a high dimensional optimization problem, with dimension up to $d=20$. 
As suggested by the theoretical analysis and in related work^^>\cite{fhps20-2, carrillo2019consensus,benfenati2021binary}, large values of $\alpha$ lead to a faster convergence and to a higher success rate. For this reason, in all our tests we fix  $\alpha = 10^6$.

\subsection{Simulation of mean-field regime}
\label{sec:41}

For a numerical  simulation of  the mean-field equation we consider a particle based discretization with  a large number of particles, $N = 10^6$. The first problem we consider is given by  and  
\be
\begin{split}
&\min_{x\in \RR^2} j(x):= \frac12\sum_{i=1}^2 \left(\frac{x_i^4}5 - 2x_i^2 +x_i\right)  +10  \\
& \text{subject to} \quad g(x) = R(z) = \frac{1}{2}\sum_{i=1}^2 z_i^2 - 10\cos(2\pi z_i) + 5\leq 0
\end{split}
\label{pb:num1}
\ee
where $z$ performs a translation and a rotation of $x$, namely
\[ z = \begin{pmatrix}
\cos(\pi/6) & - \sin (\pi/6) \\
\sin(\pi/6) & \cos(\pi/6) 
\end{pmatrix} ( x - (1,1)^\top)\,.
\]

We note that $j(x)$ exhibits multiple local minima and a global minimum $\hat x$ which does not belong to the admissible set $\mathcal{M} = \{ x \in \RR^2 | g(x) \leq 0\}$. $\mathcal M$ constitutes of a set of disjoint discs and the problem \eqref{pb:num1} admits a unique global solution $x^*$, see Figure \ref{fig:visual1} for the problem visualization.

We consider the exact $\ell_1$-penalization  $r(x) = \|g(x)\|_1$. The function  $R$ is the Rastrigin function, which also presents several local minima and, as a consequence, the penalty function $P(x, \beta) = j(x) + \beta r(x)$ is  non convex for all values of $\beta$. Figure \ref{fig:visual2} shows, for instance, $P(x,\beta)$ for $\beta = 5$. Note that the  value of the true multiplier $\b$ is unknown but between $4$ and $5.$ 

\begin{figure}
\begin{subfigure}{0.48\linewidth}
\centering
\includegraphics[trim= 7cm 10.5cm 7cm 11cm, clip,width =0.98\linewidth]{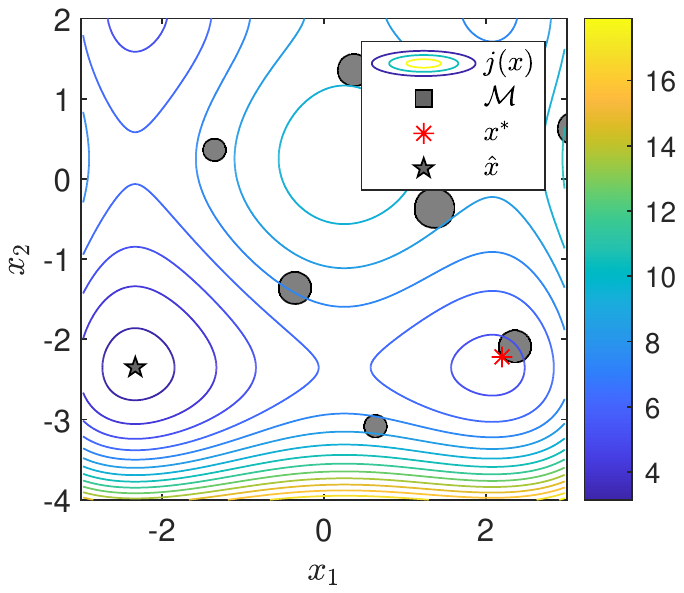}
\caption{Contour values of the objective function $j(x)$ and, in gray, the feasible set $\mathcal M$.}
\label{fig:visual1}
\end{subfigure}
\begin{subfigure}{0.48\linewidth}
\centering
\includegraphics[trim=  7cm 10.5cm 6.9cm 11cm, clip,width =1\linewidth]{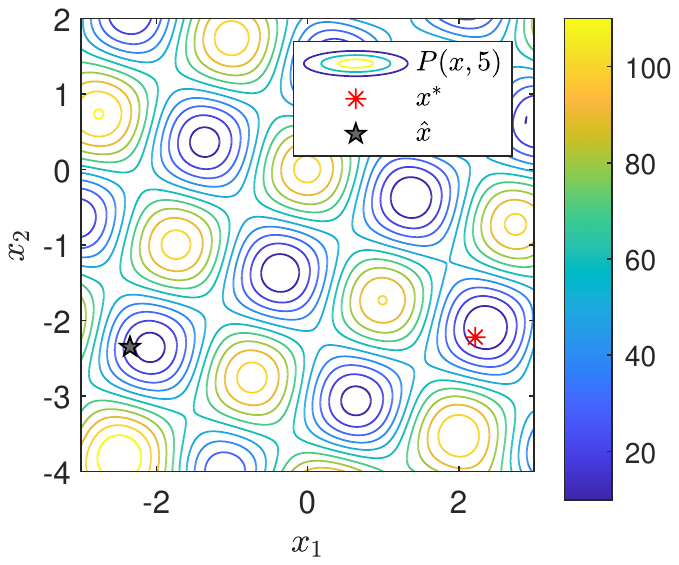}
\caption{Contour lines of the penalty function $P(x, \beta)$ for $\beta = 5$. }
\label{fig:visual2}
\end{subfigure}
\caption{A  representation of the constrained optimization problem \eqref{pb:num1} and its equivalent penalty subproblem. Both images also show the problem solution $x^*$ in red and the infeasible minimum $\hat x$ of $j(x)$ in black.}
\label{fig:visual}
\end{figure}

In order to compare with the theoretical analysis, we evolve $N = 10^6$ particles with isotropic diffusion, see \eqref{eq:iso}, and, to decide whether we need to update the parameter $\beta_k$, we compare the simple expectation of $r(x), \int r(x) df_k(x),$ with the tolerance $1/\sqrt{\theta_k}$. We set $\lambda = 1, \sigma= 0.5, \Delta t = 10^{-2}$ and the initial penalty parameter to $\beta_0 = 0.1$. Hence, initially  the penalty function is not(!) exact.

In the first simulation of the mean-field dynamics, we set the initial tolerance to $0.25$ $(\theta_0 = 16)$ and $\eta_\theta = 1.01$.
\cref{fig:evolution} shows the evolution of the particle density $f$ at different times while in \cref{fig:mfplota} we show the time evolution of the feasibility violation together with the tolerance $1/\sqrt{\theta_k}$ and $\beta_k$.  We note that, as long as $\beta_k$ is smaller than $\bar \beta$, $f$ concentrates around the infeasible minimizer $\hat x$, see \cref{fig:mfa,fig:mfb,fig:mfc}. At time $t=4$, $\beta_k > \bar \beta$ and $f$ spreads again, \cref{fig:mfd}, and then concentrates around the feasible solution $x^*$ afterwards, \cref{fig:mfe,fig:mff}.

\begin{figure}[h]
\begin{subfigure}{0.33\linewidth}
  \centering
  \includegraphics[trim= 7cm 11.2cm 8cm 11.7cm, clip, width=1\linewidth]{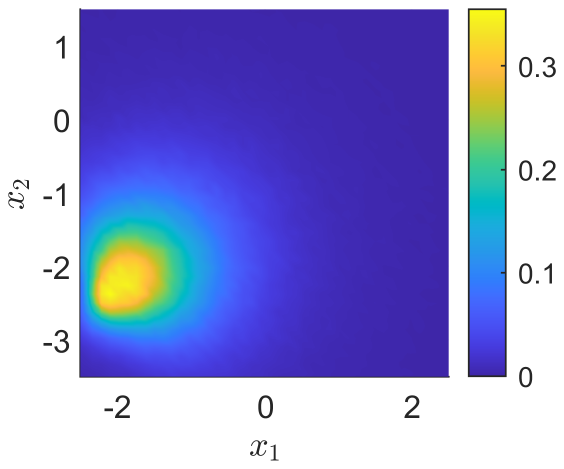}
  \caption{$t = 1, \beta/ \bar\beta \simeq 0.06 $}
 \label{fig:mfa}
\end{subfigure}%
\begin{subfigure}{0.33\linewidth}
  \centering
  \includegraphics[trim= 7cm 11.2cm 8cm 11.7cm, clip, width=1\linewidth]{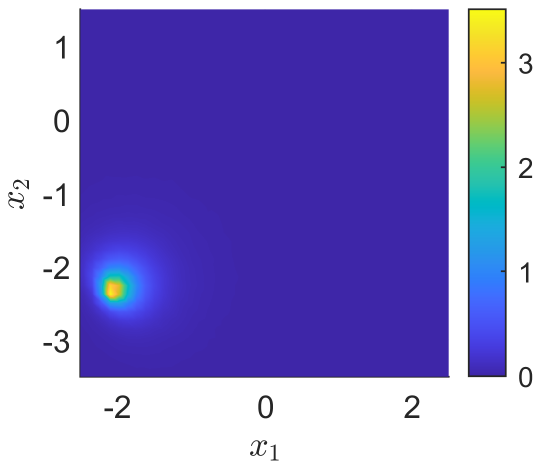}
  \caption{$t = 2, \b/ \bar \beta \simeq 0.17 $}
 \label{fig:mfb}
\end{subfigure}%
\begin{subfigure}{0.33\linewidth}
  \centering
  \includegraphics[trim= 7cm 11.2cm 8cm 11.7cm, clip, width=1\linewidth]{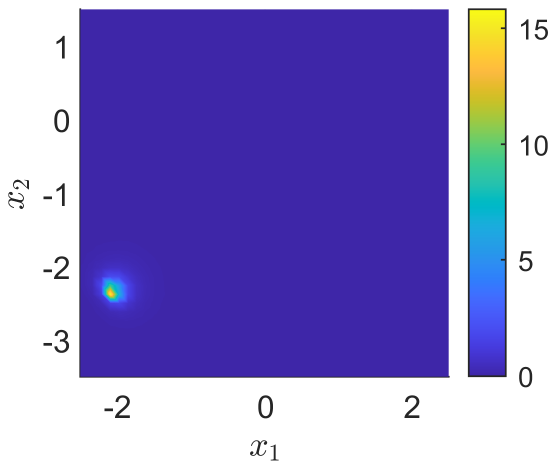}
 \caption{$t = 3, \b/\bar \beta \simeq 0.45 $}
 \label{fig:mfc}
\end{subfigure}
\begin{subfigure}{0.33\linewidth}
  \centering
  \includegraphics[trim= 7cm 11.2cm 8cm 11.7cm, clip, width=1\linewidth]{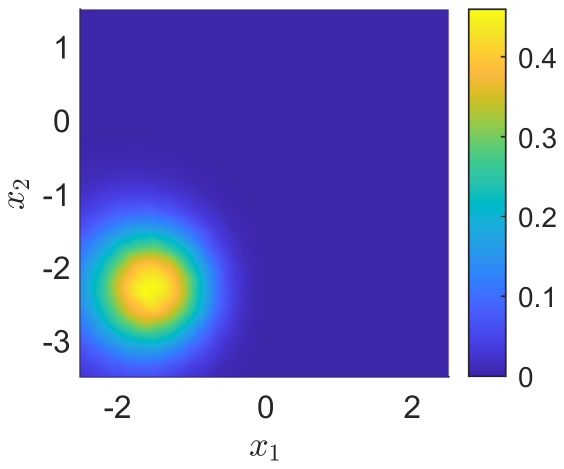}
  \caption{$t = 4, \b/\bar \b \simeq 1.22 $}
  \label{fig:mfd}
\end{subfigure}
\begin{subfigure}{0.33\linewidth}
  \centering
  \includegraphics[trim= 7cm 11.2cm 8cm 11.7cm, clip, width=1\linewidth]{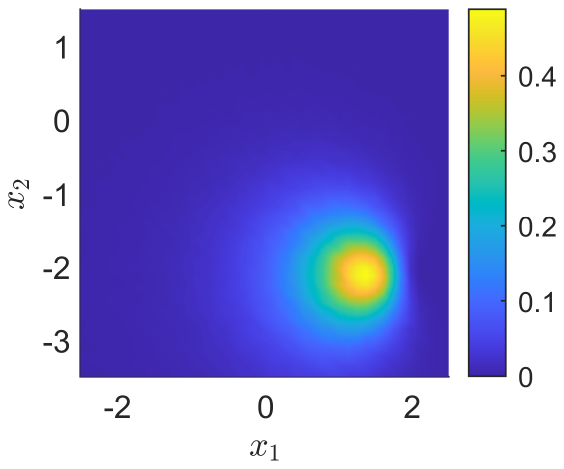}
  \caption{$t = 5, \b/\bar \b \simeq 3.31$}
 \label{fig:mfe}
\end{subfigure}
\begin{subfigure}{0.33\linewidth}
  \centering
  \includegraphics[trim= 7cm 11.2cm 8cm 11.7cm, clip, width=1\linewidth]{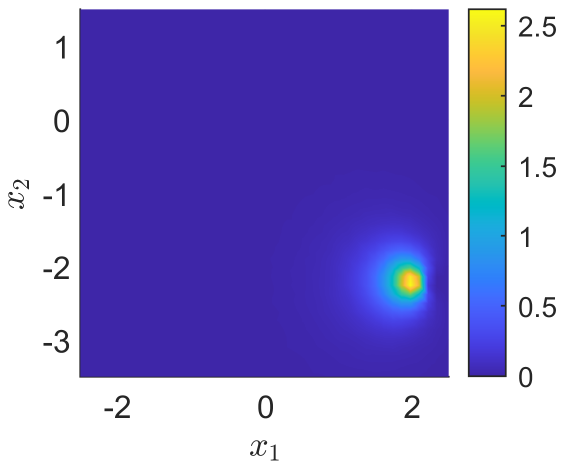}
  \caption{$t = 6, \b/\bar \b \simeq 8.96$}
 \label{fig:mff}
\end{subfigure}

\caption{Evolution of the particle density  at different times. Number of particles: $10^6$, Isotropic diffusion. $\lambda = 1, \sigma = 0.5, \Delta t = 0.01, \theta_0 = 2, \beta_0 = 0.1, \eta_\theta = 1.01, \eta_\b = 1.01$.}
\label{fig:evolution}
\end{figure}

For this particular choice of $\theta_0$, the constraint violation at the infeasible minimum $r(\hat x) \simeq 0.88$ is larger than the initial tolerance $0.25$, which means that condition \eqref{eq:infeas} of Proposition \ref{p:2} is satisfied. This not the case if we consider the case of $\theta_0 = 0.25$, as $1/\sqrt{\theta_0} = 2 > r(\hat x)$. Proposition \ref{p:2} suggests  that in this case the particle density might, concentrate around the infeasible minimizer, which is observed, see \cref{fig:mfplotb}.

In the first case,  we note that $\beta_k$ remains constant after a certain point (\cref{fig:mfplota}). This implies that the feasibility condition is satisfied and that the constraint violation decays faster than the tolerance. We recall that this behavior is expected due to   Proposition \ref{p32}. Therein, it has been shown that the  particles concentrate around the minimizer, see eq. \eqref{eq:p32}. Further,  if $\eta_\theta \leq \exp\left((\lambda - d\sigma^2/2) \Delta t \right)$, then the feasibility condition will be satisfied until the desired accuracy is reached. Here, the condition on $\eta_\theta$ is actually not satisfied as $\eta_\theta = 1.01$ is slightly larger than $\exp\left((\lambda - d\sigma^2/2) \Delta t \right) \simeq 1.008$. Nevertheless, in the simulation, the constraint violation decays faster than the tolerance. Using even larger values of $\eta_\theta$, like $\eta_\theta = 1.1$,   $\beta_k$ increases during the entire computation, see \cref{fig:mfplotc}.

To conclude, we note that if one computes the feasibility violation by using the Gibbs distribution, as in \eqref{eq:constr2}, the algorithm performance improves drastically. In particular, as seen in \cref{fig:mfplotd}, the feasibility condition is satisfied almost as soon as the threshold value $\bar \beta$ is reached. While in  the mean-field regime all shown methods lead to success, we will see in the next section that employing the Gibbs distribution within the feasibility check is essential to obtain results with strong performance also in the case of a {\em small } number of particles.

\begin{figure}[h!]
\begin{subfigure}{0.5\linewidth}
  \centering
  \includegraphics[trim= 6cm 10cm 6cm 10cm, clip, width=.95\linewidth]{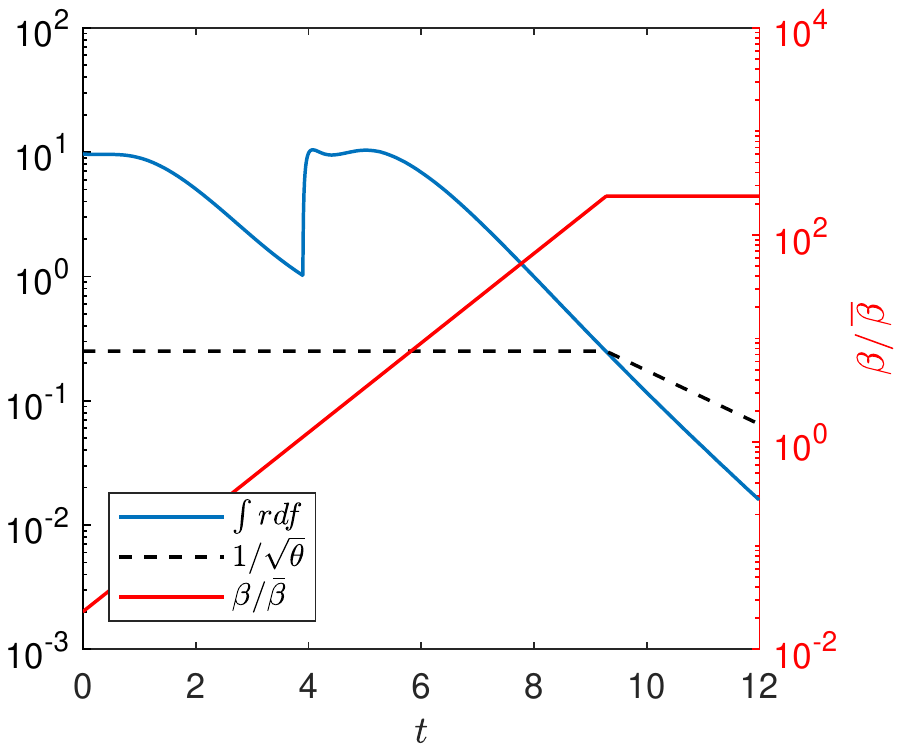}
  \caption{$1/\sqrt{\theta_0} = 0.25, \eta_\theta = 1.01.$}
 \label{fig:mfplota}
\end{subfigure}%
\begin{subfigure}{0.5\linewidth}
  \centering
  \includegraphics[trim= 6cm 10cm 6cm 10cm, clip, width=.95\linewidth]{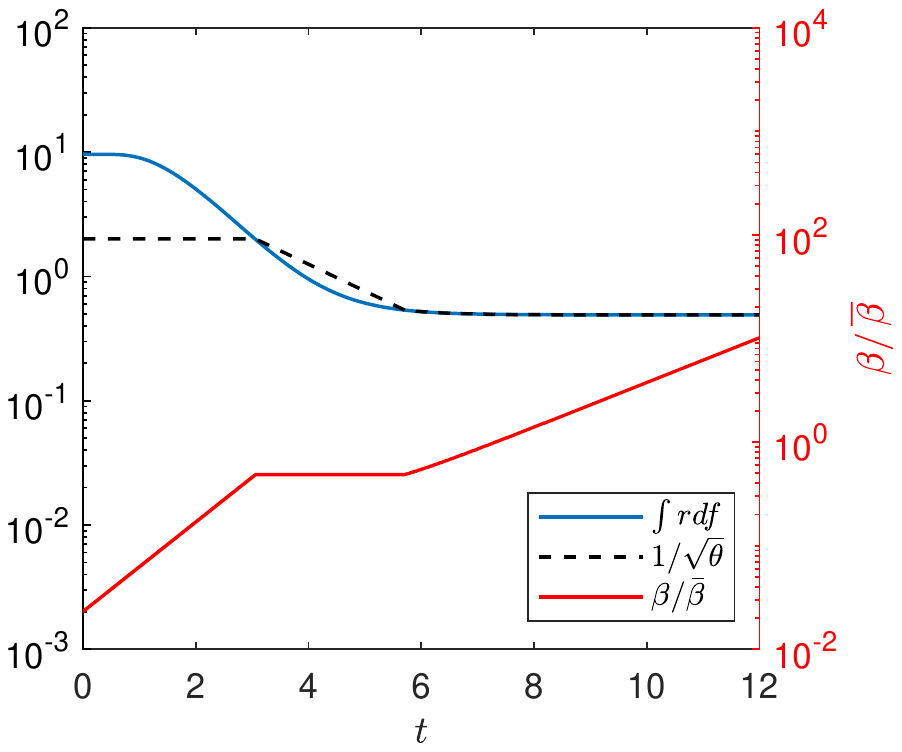}
  \caption{$1/\sqrt{\theta_0} = 2, \eta_\theta = 1.01.$}
 \label{fig:mfplotb}
\end{subfigure}%

\begin{subfigure}{0.5\linewidth}
  \centering
  \includegraphics[trim= 6cm 10cm 6cm 10cm, clip, width=.95\linewidth]{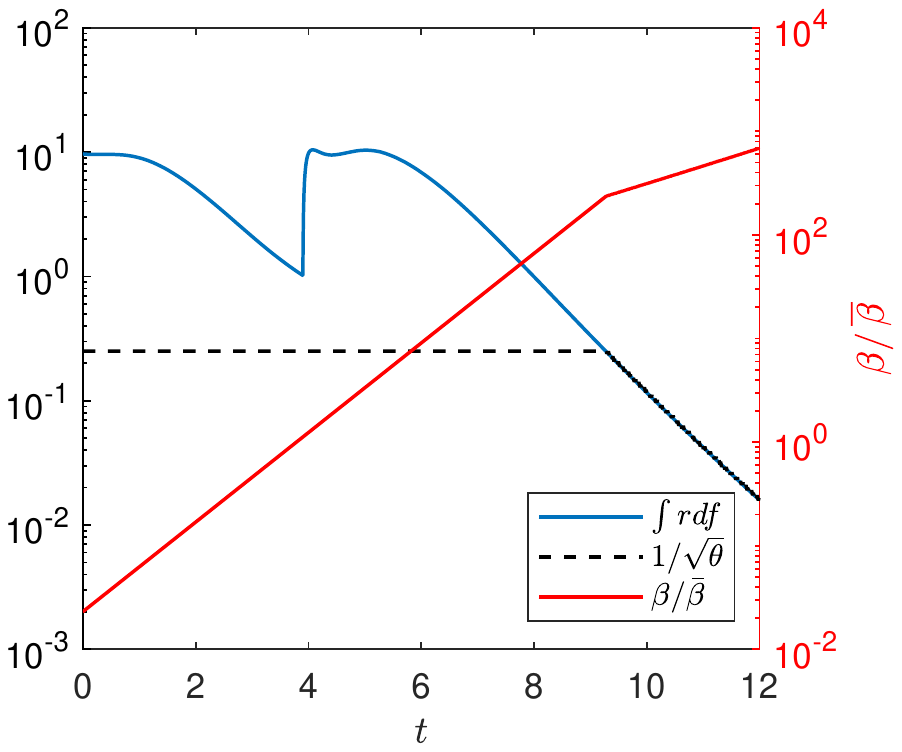}
 \caption{$1/\sqrt{\theta_0} = 0.25, \eta_\theta = 1.1.$}
 \label{fig:mfplotc}
\end{subfigure}
\begin{subfigure}{0.5\linewidth}
  \centering
  \includegraphics[trim= 6cm 10cm 6cm 10cm, clip, width=.95\linewidth]{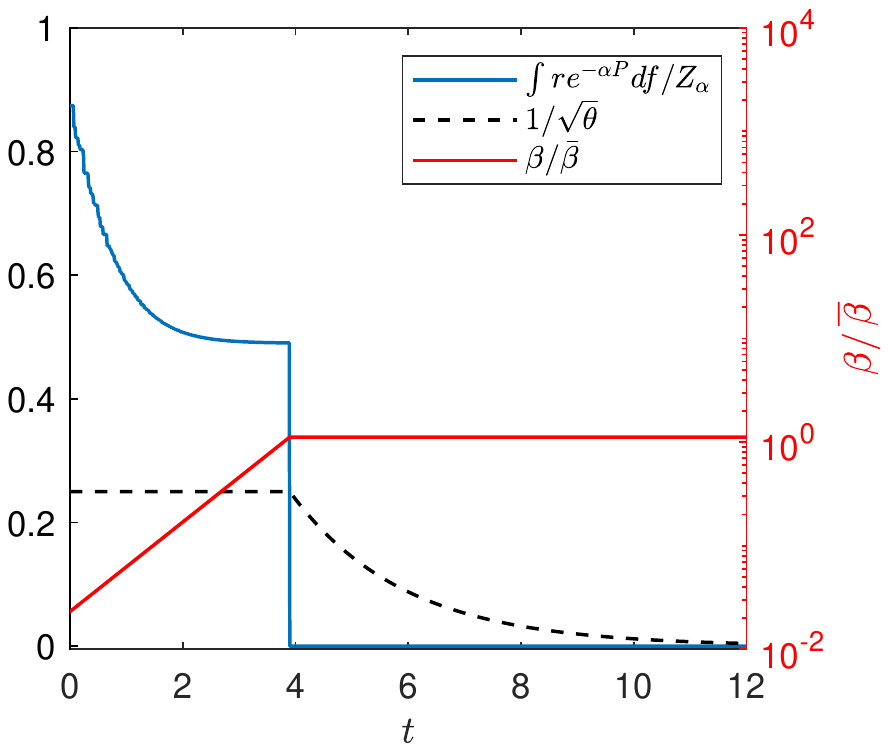}
 \caption{$1/\sqrt{\theta_0} = 0.25, \eta_\theta = 1.01,$ weighted feasibility check.}
 \label{fig:mfplotd}
\end{subfigure}
\caption{Evolution of the constraint violation (in blue), the tolerance $1/\sqrt{\theta_k}$ and the penalty parameter $\beta_k$ in the simulation of the mean-field algorithm applied to problem \eqref{pb:num1} in four different settings. Only in \cref{fig:mfplotd} the constraint violation is calculated using the Gibbs distribution as in \eqref{eq:constr2}}
\label{fig:mfplot}
\end{figure}

\subsection{Benchmark problems in $d=5$}

We validate the proposed algorithm by solving four different test problems for  objective functions 

\begin{align*}
j_1 (x) & = \frac1d\sum_{i=1}^d \left(\frac{x_i^4}5 - 2x_i^2 +x_i\right)  +10  
\\
j_2(x)  & = -20\exp\lp -0.2\sqrt{\frac{1}{d}\sum_{i=1}^{d}(x - o)_i^2}\rp-\exp\lp\frac{1}{d}\sum_{i=1}^{d}\cos(2\pi (x - o)_i)\rp+ 20 +e \\
&\notag \qquad \text{were} \quad o = (1.7\bar6, 1.5\bar3, 1.\bar3,1.0\bar 6, 0.8\bar 3) 
\end{align*}

with two admissible sets $\mathcal M_1 = \mathbb{S}^{4}, \mathcal M_2 = \mathbb{T}^{4}$, that is the sphere and the torus. 
The constrained optimization problems we consider are for $l,i=1,2$:
\be
\min_{x \in \RR^5} j_l(x) \quad \text{subject to} \quad x \in \mathcal{M}_i \quad \text{for}
\quad l=1,2\,, \quad i = 1,2\,.
\label{pb:num2}
\ee

As exact penalty, we use the distance function as in (A1.3):

\begin{align*}
r_1(x)  &= d(x, \mathbb{S}^4) = \left| \|x \| -1 \right|
\\
r_2(x) & = d(x, \mathbb{T}^4) = \left |\sqrt{ \left(\sqrt{\|x\|^2 - x_d^2} - 1\right)^2 + x_d^2}-0.5 \right |
\end{align*}
where $x_d$ is the $d$-component of $x$. We note that $j_1$ is the same objective function we used in problem \eqref{pb:num1}, while $j_2$ is the Ackley function. These objective functions have several local minima, both as functions on the whole domain $\RR^d$ and as functions restricted to the admissible sets $\mathcal M_1, \mathcal M_2$, making the optimization problems particularly challenging.

We will always run the algorithm several times with different initialization of $\beta_0$, since the true value $\bar\b$ such that (A1.1) holds is unknown. Specifically, we take $\beta_0$ in a range between $10^{-5}$ and $10^3$. For these test problems, $\bar \beta \in [1,10]$. 

We restrict ourselves  to the case where the particles evolve with the isotropic exploration process \eqref{eq:iso}, and we set $\sigma = 0.6$. We leave the comparison with the anisotropic process to the next section. The remaining parameters are set to be  $\lambda=1, \Delta t = 0.1, \theta_0 = 4, \eta_\theta = 1.1, \eta_\beta = 1.1.$ and we evolve $N = 200$ particles, initially sampled from a uniform distribution on $[-2,2]^d$, for $K = 300$ iterations. Finally, we consider a run to be successful if
\[ \| X_\alpha^K  - x^*\|_\infty \leq 0.1
 \]  
where $X_\alpha^k$ is defined as in \eqref{eq:Xa} and $x^*$ is the unique solution of the constrained problem we are solving.

\begin{figure}[t]
\begin{subfigure}{0.33\linewidth}
  \centering
  \includegraphics[trim= 7.5cm 10.5cm 7.5cm 10.5cm, clip, width=1\linewidth]{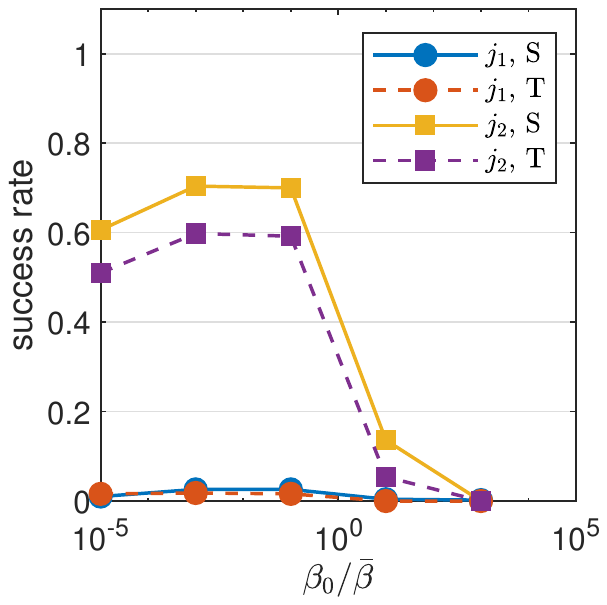}
  \caption{Feasibility check with \eqref{eq:expect}.}
 \label{fig:succ1}
\end{subfigure}%
\begin{subfigure}{0.33\linewidth}
  \centering
  \includegraphics[trim= 7.5cm 10.5cm 7.5cm 10.5cm, clip, width=1\linewidth]{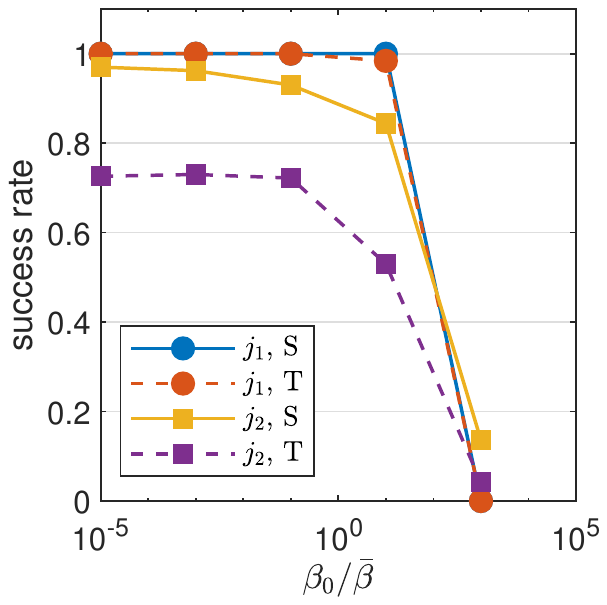}
  \caption{Feasibility check with \eqref{eq:gibbs}.}
 \label{fig:succ2}
\end{subfigure}%
\begin{subfigure}{0.33\linewidth}
  \centering
  \includegraphics[trim= 7.5cm 10.5cm 7.5cm 10.1cm, clip, width=1\linewidth]{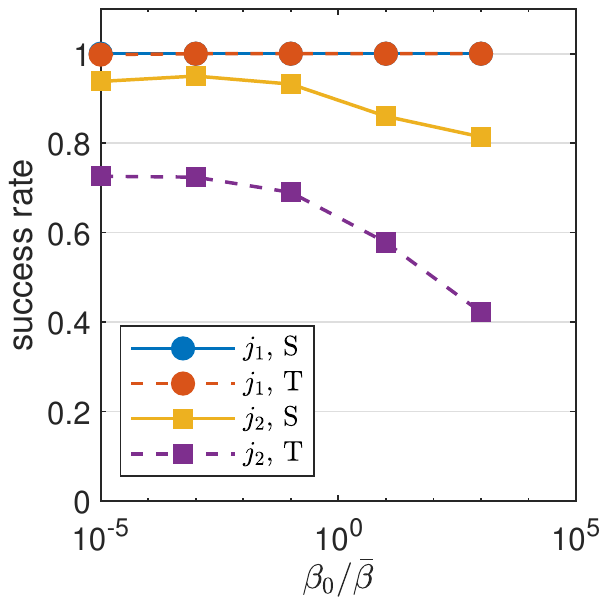}
  \caption{Feasibility check with \eqref{eq:gibbs} and decreasing strategy for $\beta$.}
 \label{fig:succ3}
\end{subfigure}%
\caption{Success rate obtained for different initialization of $\beta_0$. The tests are run 500 times to solve problems \eqref{pb:num2}. In (a) the feasibility check is performed with \eqref{eq:expect}, in (b) and (c) with \eqref{eq:gibbs}. In (c) the additional decreasing strategy for $\beta$ is also used.}
\label{fig:succ}
\end{figure}
\begin{figure}[h]
\begin{subfigure}{0.33\linewidth}
  \centering
  \includegraphics[trim= 7.5cm 10.5cm 7.5cm 10.5cm, clip, width=1\linewidth]{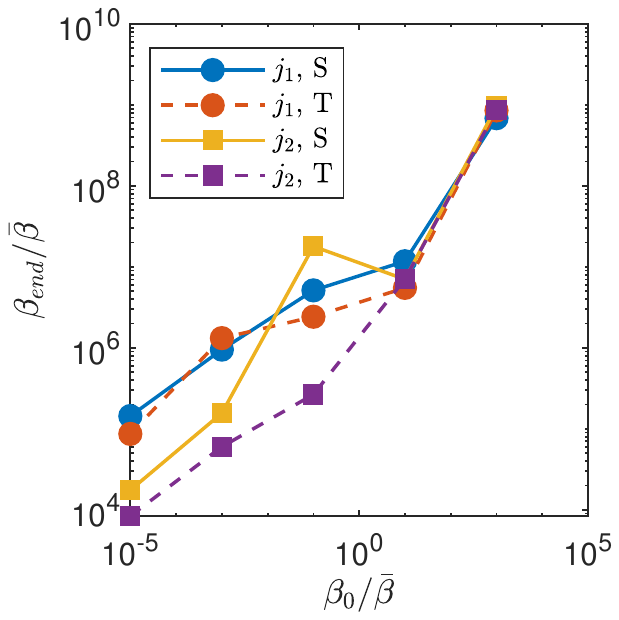}
  \caption{Feasibility check with \eqref{eq:expect}.}
 \label{fig:beta1}
\end{subfigure}%
\begin{subfigure}{0.33\linewidth}
  \centering
  \includegraphics[trim= 7.5cm 10.5cm 7.5cm 10.5cm, clip, width=1\linewidth]{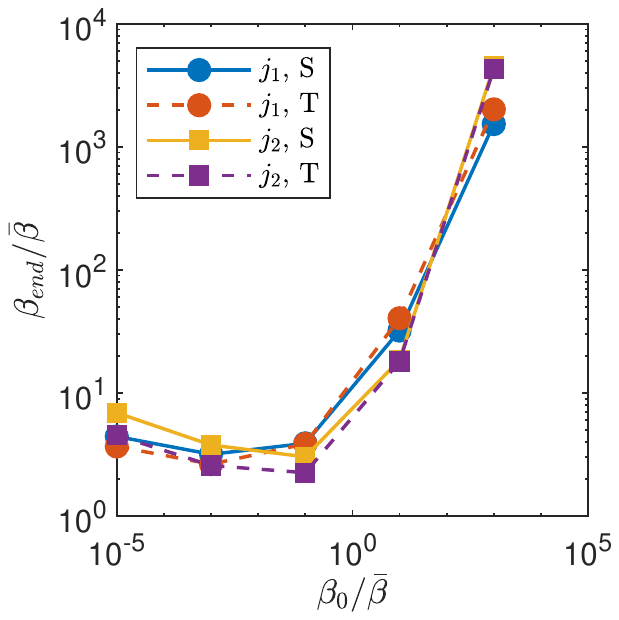}
  \caption{Feasibility check with \eqref{eq:gibbs}.}
 \label{fig:beta2}
\end{subfigure}%
\begin{subfigure}{0.33\linewidth}
  \centering
  \includegraphics[trim= 7.5cm 10.5cm 7.5cm 10.5cm, clip, width=1\linewidth]{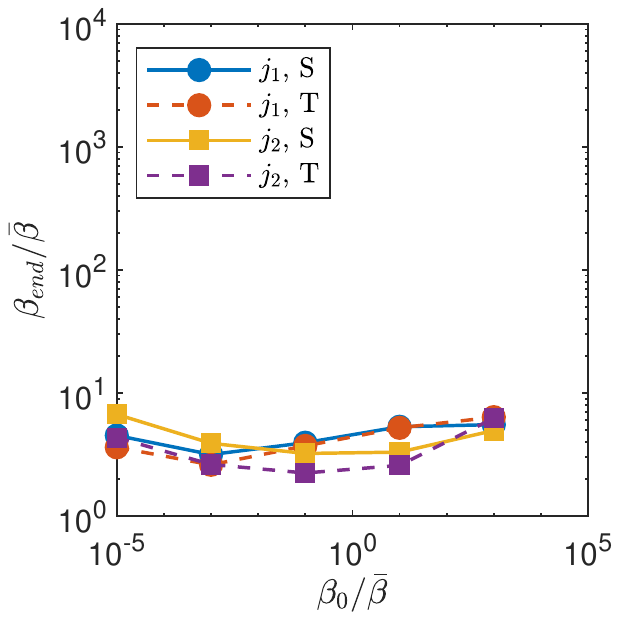}
  \caption{Feasibility check with \eqref{eq:gibbs} and decreasing strategy for $\beta$.}
 \label{fig:beta3}
\end{subfigure}%

\caption{Final value of $\beta$ after the computation as a function of the initial value $\beta_0$, both normalized with respect to $\bar \beta$. The tests are run 500 times to solve problems \eqref{pb:num2}. In (a) the feasibility check is performed with \eqref{eq:expect}, in (b) and (c) with \eqref{eq:gibbs}. In (c) the additional decreasing strategy for $\beta$ is also used.}
\label{fig:beta}
\end{figure}

We  validate the algorithm's performance on the feasibility check by computing the quantity \eqref{eq:constr1}: 
\be
\frac 1N \sum_{i=1}^N r(X_k^i)\, . 
\label{eq:expect}
\ee
As seen in Fig. \cref{fig:succ1}, the success rate is rather poor for almost all problems and for several values of $\beta_0$. This is due to the fact that,  the feasibility check is violated several times and, as a consequence, the penalty parameter increases, reaching large values at the end of the computation, see \cref{fig:beta1}.  

As we mentioned in Section \ref{sec:41}, considering the weighted expectation of $r$
\be
\frac{1}{Z_\alpha}\sum_{i=1}^N\, r(X_k^i)\, \exp\left(-\alpha \pen(X_k^i,\beta)\right),  
\label{eq:gibbs}
\ee
instead of \eqref{eq:expect}, improves the algorithm performance. In particular, if $\beta_0$ is not too large, we obtain a success rate close to one when the objective function is $j_1$, as we can see from \cref{fig:succ2}. \cref{fig:beta2} shows, indeed, that the final value of the penalty parameter does not overshoot $\bar \beta$ significantly. 

In all the problems, we observe that if $\beta_0 > \bar \beta$, $\beta_k$ increases moderately during the computation, which is a desired feature of the algorithm (\cref{fig:beta2}). This is not enough, though, to successfully solve the problem if our initial guess of the penalty parameter is too large, that is for instance when we choose $\beta_0 = 10^3$ for these test problems, see again \cref{fig:succ2}. 
As an interpretation, we may argue that this happens, since  the methods tries to minimize the penalty function $P(x) = j(x) + \beta r(x)$ and the penalty term $\b r(x)$ overwhelms $j(x)$ if $\beta \gg \bar \beta$.
\par 
To overcome this issue, we propose the following heuristic strategy: during the computation we decrease $\beta$ by setting $\beta_{k+1} = \beta_k/ \eta_\beta$  until the feasibility condition is violated for the first time. Applying this simple strategy, the algorithm performance becomes less sensitive to the choice of $\beta_0$, see \cref{fig:succ3,fig:beta3}.

We conclude by remarking that when optimizing $j_2$ (the Ackley function) either over $\mathbb{S}^4$ or $\mathbb{T}^4$, the proposed algorithm is not able to reach a success rate of $1$ (\cref{fig:succ3}), although it is able to properly adapt the penalty parameter (\cref{fig:beta3}). This is due the fact that, in both cases, the particles may  concentrate on a local minimum which satisfies the constraint and it is very close to the  true solution in terms of the value of objective function $j_2$.

\subsection{Benchmark problems in higher dimensions}
\label{s:43}

Being able to tackle high dimension problems is of paramount importance in applications and particles methods seem to be able to perform well even when $d\gg 1$,^^>\cite{benfenati2021binary, carrillo2019consensus,fhps20-2}. We recall that, 
to obtain convergence guarantees for the CBO method with isotropic diffusion, the drift and the diffusion parameters $\lambda$ and $\sigma$ need to satisfy the condition $2\lambda> d\sigma^2$, which is dimension dependent (see \cref{t:conv}). This makes the parameters choice particularly restrictive for high dimensional problems. 
To overcome this issue, the CBO method with anisotropic exploration \eqref{eq:aniso} has been introduced in^^>\cite{carrillo2019consensus}. Here, the noise is added to the particle dynamics component-wise and, as a consequence, the restriction between the parameters $\lambda$ and $\sigma$ becomes $2\lambda>\sigma^2$. We refer to^^>\cite{carrillo2019consensus, fornasier2021convergence, fhps20-3}  for more details.

To compare the use of isotropic and anisotropic explorations in Algorithm \ref{alg:iter}, we test the methods on the following scalable constrained optimization problems:

\be
\label{pb:qp}
\min_{x \in \RR^d}\, \frac12 x^\top A x - b^\top x \quad \text{subject to}
\quad H^\top x + h^0 = 0, \; x\geq 0\,,
\ee
where $A \in \RR^{d \times d}$ is a symmetric positive definite matrix
and $H\in \RR^{p \times d}$, $p=\lfloor d/2 \rfloor$. 
The interested reader can find the details on how to randomly generate \eqref{pb:qp} in^^>\cite{spellucci2002}. We note that the problem is quadratic, convex and it admits a unique global solution $x^*$. We use $\ell_1$-penalization, $r(x) = \| H^\top x - h^0\|_1$, which is exact, and generate the problem such that $\bar \beta$ is approximately $1$, hence (A1.1) if fulfilled. In the tests, we consider $d=10, 15, 20$, fix $\lambda =1$, while we vary the diffusion parameter $\sigma$. The remaining parameters are set to $\Delta t = 0.1, \theta_0 = 4, \eta_\theta = 1.05, \eta_\beta = 1.05, K = 300, N = 500$, that is we use $500$ particles. A run is considered successful when $ \| X_\alpha^K  - x^*\|_\infty \leq 0.25$.  

As in the case of unconstrained optimization, the proposed algorithm performs well for limited values of $\sigma$ when using isotropic diffusion, see \cref{fig:qp1}. In particular, the optimal value changes according to $d$, showing the dependence on the  dimension of the problem. Anisotropic diffusion, instead, allows both to reach a higher success rate and to obtain computational results also for  a wider range of values of $\sigma$, as shown in \cref{fig:qp2}.

\begin{figure}[t]
\begin{subfigure}{0.5\linewidth}
  \centering
  \includegraphics[trim=6cm 10.5cm 6.5cm 11.2cm, clip, width=0.9\linewidth]{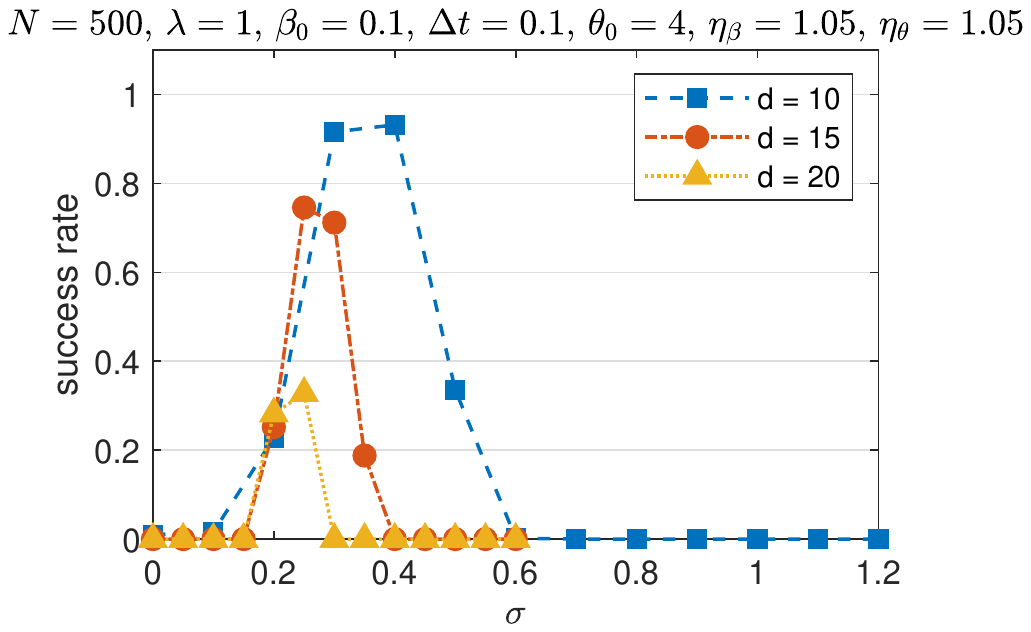}
  \caption{Isotropic exploration.}
 \label{fig:qp1}
\end{subfigure}%
\begin{subfigure}{0.5\linewidth}
  \centering
  \includegraphics[trim= 6cm 10.5cm 6.5cm 11.2cm, clip, width=0.9\linewidth]{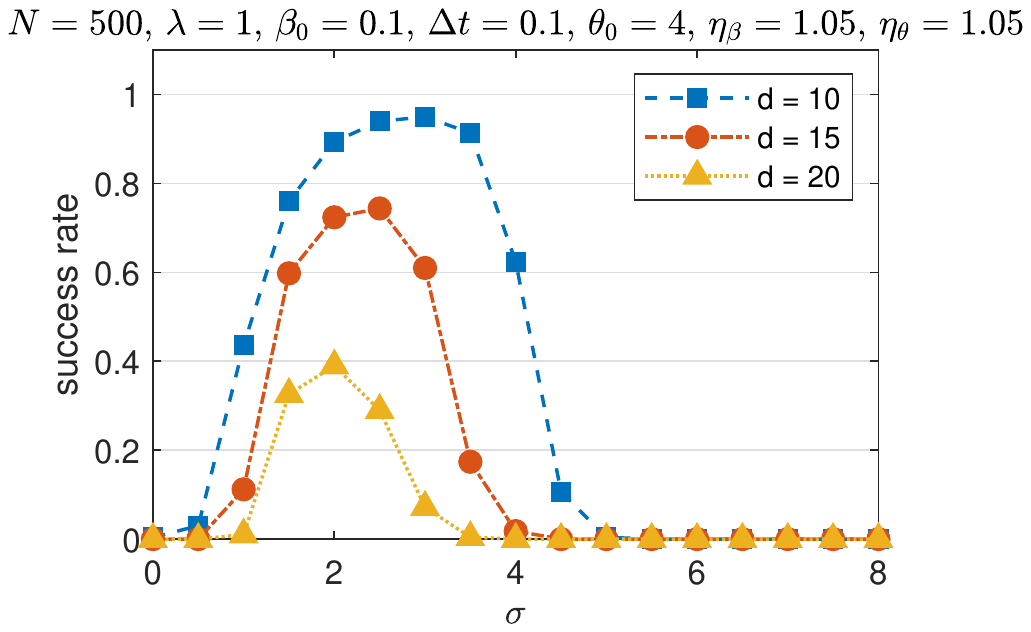}  
  \caption{Anisotropic exploration.}
 \label{fig:qp2}
\end{subfigure}%
\caption{Success rate as a function of the diffusion parameter $\sigma$ for the two different exploration processes. The results are averaged over 500 runs of the algorithm, used to solve \eqref{pb:qp}.}
\label{fig:qp}
\end{figure}

To conclude, we mention some random batch techniques that are typically used to speed-up the converge of particle-based methods and that can also be applied within  Algorithm \ref{alg:iter}. The first consists of the approach introduced in^^>\cite{AlPa}, where a random subset $\{X_k^j\,;\, j \in J_M\}$, of $|J_M|=M<N$ particles is selected at each iteration. Then $X_k^\alpha$ is calculated within the batch as
\[ X_k^\alpha =  \frac{1}{Z_{\alpha,M}}\sum_{j \in J_M} X_k^j \exp\left(-\alpha \pen(X_k^j,\beta)\right), \quad Z_{\alpha,M} = \sum_{j \in J_M}\exp\left(-\alpha \pen(X_k^j,\beta)\right)\,.
\]
One may then decide to update the entire set of particles or the batch only, reducing in this way the computational cost per iteration from $\mathcal{O}(N)$ to $\mathcal{O}(M)$. Another batch approach consists of dividing all the particles in $S$ different batches $J_M^1, \dots, J_M^S$ with $SM = N$ and let them interact only within the assigned batch, see^^>\cite{carrillo2019consensus,JLJ } for more details. We remark that these methods not only save computational time but also add additional stochasticity to the particle dynamics which might improve the algorithm's performance.

\section{Conclusions}
In this work we have extended the class of CBO methods, a novel class of gradient-free methods recently introduced in the context of global optimization of nonconvex functionals in high dimension^^>\cite{pinnau2017consensus,carrillo2019consensus}, to the case of constrained minimization problems. To this end, we introduced a penalty term in the constrained problem and derived an iterative procedure to determine the optimal penalty parameter based on the constraint violation  by the particle system.  
In particular, following the strategy based on analyzing the system behavior for a large number of particles via the corresponding mean-field limit^^>\cite{fornasier2021consensusbased}, we then proved convergence to the constrained minimum for a large class of nonlinear problems. Even if the mathematical analysis is carried on for isotropic exploration processes, extension to the anisotropic case are discussed in view of the recent result in^^>\cite{fornasier2021convergence}. 
The theoretical analysis is confirmed by numerical simulations of the system behavior in the mean-field limit. Numerous applications to constrained minimization problems in high dimension are also presented showing the very good performance of the new numerical method.

\subsection*{Acknowledgments}
This work has been written within the
activities of GNCS group of INdAM (National Institute of
High Mathematics). L.P. acknowledge the partial support of MIUR-PRIN Project 2017, No. 2017KKJP4X “Innovative numerical methods for evolutionary partial differential equations and applications”.  
The work of G.B. is funded by the Deutsche Forschungsgemeinschaft (DFG, German Research Foundation) – Projektnummer 320021702/GRK2326 – Energy, Entropy, and Dissipative Dynamics (EDDy).

\bibliographystyle{abbrv}
\bibliography{Constrained-swarm-opt-v3}

\end{document}